\newtheorem{theorem}{Theorem}
 \newtheorem*{definition}{Definition}
 \newtheorem{lemma}{Lemma}
\begin{document}

\noindent

\title { First order  joint  differential projective invariants.} 

\author{Leonid Bedratyuk} 

\begin{abstract}

We present a complete algebraic description of the field of first-order joint projective invariants for configurations of \( n \) points in the plane, under the natural diagonal action of the projective group \( PGL(3,\mathbb{R}) \). For  \( n > 1 \), we construct an explicit minimal generating set for the field of absolute invariants and prove its algebraic independence. We further determine the structure of the full field of invariants as a simple algebraic extension of field of absolute invariants, generated by a single primitive relative invariant of weight~$-1$, for which we provide a closed-form expression valid for all \( n > 1 \).

\textbf{Keywords:}
projective group, invariant theory,  joint invariants, differential invariants,  computer vision.
\end{abstract}

\maketitle

\section{Introduction}

The introduction of image moment invariants by Hu in his seminal work~\cite{Hu} stands as a classical example of how invariant theory can be fruitfully applied to problems in pattern recognition. In image analysis, an \emph{image function} is a piecewise continuous real-valued function \( u(x,y) \) defined on a compact domain \( \Omega \subset \mathbb{R}^2 \), with finite, nonzero integral.

Hu studied the \emph{geometric moments} of \( u \), defined by
\[
m_{pq}(f) = \iint\limits_{\Omega} x^p y^q u(x,y)\, dx\, dy,
\]
and, employing results from the classical invariant theory of binary forms, derived image moment invariants under the action of the planar translation, scaling, and rotation groups, expressed as rational functions of the moments \( m_{pq} \). Moment invariants with respect to the full plane affine group were later obtained in~\cite{SF}. These invariants have long been used as effective tools for constructing hand-crafted features in image processing, particularly prior to the deep learning era, and they continue to find applications in various domains; see~\cite{FSB} for a comprehensive survey.

The situation becomes significantly more intricate when passing from the affine group to the plane projective group. All known attempts to extend the affine theory by defining projective analogues of geometric moments and constructing invariant expressions from them have thus far proved unsuccessful. In fact, the problem of fully characterizing projective image invariants under general projective transformations remains open; see~\cite{Open}.

The core difficulty arises from the fact that the Jacobian determinant \( J(T) \) of a projective transformation \( T \) is not constant, unlike in the affine case. If one attempts to construct projective image moments in the form
\[
\iint\limits_{\Omega} F(x, y, u(x,y))\, dx\, dy,
\]
where \( F \) is an integrable function, then under a projective change of variables, the integral transforms as
\[
\iint_{\Omega} F(T(x,y), T(u(x,y)))\, J(T)\, dx\, dy,
\]
with the Jacobian \( J(T) \) depending nontrivially on the spatial coordinates. This Jacobian factor, in general, cannot be canceled, see~\cite{Van} and \cite{Suk2000}, rendering such integrals unsuitable for constructing projective invariants.

To overcome this issue, one introduces the notion of \emph{differential projective invariants} -- functions depending not only on the variables \( x, y \) and the image intensity \( u(x,y) \), but also on its partial derivatives \( u_{i,j} \). These invariants transform predictably under the prolonged action of the projective group.
A function
\[
F(x,y,u,u_{1,0},u_{0,1}, \ldots, u_{p,q})
\]
is called a \emph{relative differential projective invariant} of order \( p+q \) and weight \( \omega \) if it satisfies the transformation law
\[
F(T(x,y), T(u), T(u_{1,0}), T(u_{0,1}), \ldots, T( u_{p,q})) = J(T)^\omega\, F(x,y,u,u_{1,0},u_{0,1}, \ldots, u_{p,q})
\]
for all projective transformations \( T \). If \( \omega = 0 \), the function is called an \emph{absolute} differential invariant.
Of particular importance are the relative invariants of weight \( \omega = -1 \), since their integrals over the domain,
\[
\iint F(x,y,u,u_{1,0},u_{0,1}, \ldots, u_{p,q}) \, dx\,dy,
\]
become projectively invariant due to the cancellation of the Jacobian factor. Thus, a natural starting point in the construction of projective differential invariants is a complete description of relative projective invariants of weight~$-1$.

Partial families of such invariants were constructed in~\cite{Open}. More recently, the work~\cite{Olver2023} employed the method of equivariant moving frames to classify projective differential invariants of images up to a given order. While these invariants are elegant and theoretically powerful, they face serious limitations in practical use.

First, real-world images are discrete, whereas the theory assumes smooth functions on continuous domains. Numerical differentiation is highly sensitive to noise and errors, especially for higher-order derivatives, making high-order differential invariants unstable and unreliable in computational practice.

Second, in the discrete setting, the group structure of projective transformations is no longer preserved. In contrast to the affine case, projective maps can cause degeneracies that make distinct discrete images nearly indistinguishable, resulting in a loss of discriminative power.

These challenges hinder the direct application of high-order differential projective invariants in image analysis and motivate the development of alternative, more robust representations compatible with discrete and noisy data.

In this work, we propose an  approach based on \emph{first-order joint projective invariants} of multiple points and their associated differential features. This framework is inherently better suited to the discrete nature of digital images.

The notion of joint invariants arises naturally in the framework of equivariant moving frames~\cite{Olver1999, Olver2001, Olver2020}. First-order joint projective invariants of weight~$-1$ for three- and four-point configurations were first introduced in~\cite{Wang}. In the present paper, we generalize these results to arbitrary configurations of~$n$ points and provide a complete algebraic solution to the problem.

The article is structured as follows. Section~1 introduces the necessary preliminaries on the projective group, its diagonal action on point configurations, and its prolongation to include first-order partial derivatives.
 We establish notation, recall the definitions of relative and absolute invariants, and review the algebraic tools used in the subsequent analysis. In Section~2, we construct the field of absolute invariants by exhibiting a minimal generating set, proving its algebraic independence, and computing the transcendence degree, which is shown to be \(2n - 8\) for \(n \ge 3\). Section~3 is devoted to the study of relative invariants of weight~$-1$. We prove that the field of relative invariants is a simple algebraic extension of the absolute field, generated by a single primitive element \(z_n\) of the weight $-1$, and we derive an explicit first-order formula for \(z_n\) in terms of point coordinates and gradient directions.


\section{Preliminaries}

This section establishes the necessary terminology and notation, and reviews the foundational aspects of the projective group action and its  prolongation to first-order derivatives of functions.

\subsection{Homogeneous coordinates and projective geometry of $\mathbb{P}^2$}

The real projective plane $\mathbb{P}^2$ is defined as the set of one-dimensional linear subspaces of~$\mathbb{R}^3$, i.e.,
\[
\mathbb{P}^2 := \bigl(\mathbb{R}^3 \setminus \{0\} \bigr) / \mathbb{R}^\times,  \mathbb{R}^\times= \mathbb{R} \setminus \{ 0\}, 
\]
where two nonzero vectors are considered equivalent if they differ by a nonzero scalar multiple:
\[
(X:Y:Z) \sim (\lambda X:\lambda Y:\lambda Z), \quad \lambda \in \mathbb{R}^\times.
\]
A point in $\mathbb{P}^2$ is thus represented by a class of homogeneous coordinates $(X:Y:Z)$, not all zero.

The plane $\mathbb{R}^2$ is embedded into the projective plane $\mathbb{P}^2$ by
\[
(x,y) \longmapsto (x:y:1),
\]
so that its image consists of the projective points whose third homogeneous coordinate is nonzero.

 A  line in~$\mathbb{P}^2$ is defined as the set of points satisfying a linear homogeneous equation:
\[
L = \left\{(X:Y:Z) \in \mathbb{P}^2 \;\middle|\; pX + qY + rZ = 0 \right\},
\]
where the triple $(p:q:r) \in \mathbb{R}^3 \setminus \{0\}$ defines the line up to a scalar multiple. 
We interpret the coefficients as a row vector \( \ell = (p, q, r) \), and say that a point \( A = (X, Y, Z)^T \in \mathbb{R}^3 \) lies on the line \( \ell \) if
\[
\ell \, A = pX + qY + rZ = 0.
\]

 The set of all lines in $\mathbb{P}^2$ is naturally identified with the set of one-dimensional subspaces of  the dual space $(\mathbb{R}^3)^*$, that is,   the vector space of all linear functionals on $\mathbb{R}^3$. This defines the \emph{dual projective plane}
\[
(\mathbb{P}^2)^* := \mathbb{P}\bigl((\mathbb{R}^3)^*\bigr) \cong \mathbb{P}^2,
\]
whose elements correspond to projective lines in the original plane.

 The fundamental duality of projective geometry is encoded in the incidence relation:
\[
\text{Point } A \in \mathbb{P}^2 \text{ lies on line } \ell \quad \Longleftrightarrow \quad  \ell \, A  = 0.
\]
Thus, points in the projective plane correspond to hyperplanes (i.e., lines) in the dual space, and vice versa. This establishes a symmetric duality between points and lines: each line in the primal plane corresponds to a point in the dual plane, and each point defines a pencil of lines — a line in the dual.

 The intersection of two lines $l_1, l_2 \in (\mathbb{R}^3)^*$ is the unique point $A \in \mathbb{P}^2$ given by the vector cross product:
\[
A = l_1 \times l_2.
\]
Conversely, the projective line passing through two distinct points $A_1, A_2 \in \mathbb{P}^2$ is defined by
\[
l = A_1 \times A_2,
\]
again using the vector cross product in $\mathbb{R}^3$.

 Three points $A_1, A_2, A_3 \in \mathbb{P}^2$ are collinear if and only if the determinant
\[
\det(A_1, A_2, A_3) = 0,
\]
i.e., their homogeneous coordinates are linearly dependent. This determinant also computes twice the signed area of the oriented triangle formed by the affine representatives of the points, when all lie in the affine chart $Z=1$.

\subsection{The plane projective group $PGL(3,\mathbb{R})$}

The group of projective transformations of the plane is given by the projective general linear group:
\[
PGL(3,\mathbb{R}) := GL(3,\mathbb{R}) / \{\lambda I \mid \lambda \in \mathbb{R}^\times\},
\]
where $GL(3,\mathbb{R})$ denotes the group of all invertible $3 \times 3$ real matrices  and $I$ is the identity matrix.
An element of $PGL(3,\mathbb{R})$ is thus an equivalence class $[A]$ of invertible matrices $A$, where $A \sim \lambda A$ for all $\lambda \in \mathbb{R}^\times$. This identification ensures that the group action is well-defined on projective points, which are also defined up to scaling.
The group $PGL(3,\mathbb{R})$ is an 8-dimensional real Lie group that  
acts naturally on the projective plane $\mathbb{P}^2$ by linear transformations in homogeneous coordinates:
\[
[X:Y:Z] \;\longmapsto\; [AX],
\]
where $A \in GL(3,\mathbb{R})$ represents the transformation. In affine coordinates $(x,y)$ (corresponding to $Z = 1$), a general projective transformation is expressed as the rational map
\begin{equation*}
T\colon(x,y)\;\longmapsto\; (\tilde x, \tilde y)=
\left(
    \frac{a_1 x + a_2 y + a_3}{c_1 x + c_2 y + c_3},
    \frac{b_1 x + b_2 y + b_3}{c_1 x + c_2 y + c_3}
\right),
\end{equation*}
where the coefficients are entries of the matrix
\[
A = 
\begin{pmatrix}
a_1 & a_2 & a_3 \\
b_1 & b_2 & b_3 \\
c_1 & c_2 & c_3
\end{pmatrix} \in GL(3,\mathbb{R}).
\]


The Jacobian determinant of the projective transformation $T$ is given by
\begin{equation*}
J(T) = \frac{D}{(c_1 x + c_2 y + c_3)^3}, \qquad  D=\det(A).
\end{equation*}
In contrast to the affine case (where $J(T)$ is constant), here the Jacobian depends on the point $(x, y)$, and the factor $(c_1 x + c_2 y + c_3)$ appears due to the rational form of the map. This nontrivial Jacobian is responsible for the complication in constructing integral invariants under projective transformations and motivates the use of differential invariants.

\subsection{Prolongation of the action to derivatives}

Let $u = u(x,y)$ be a smooth real-valued function defined on an open domain $\Omega \subset \mathbb{R}^2$. 
 We define the natural action of $T$ on the function $u$ as
\[
(T \cdot u)(x,y) = \tilde u(x,  y) := u(T^{-1}(x,y)),
\]
i.e., $\tilde u$ is obtained by precomposing $u$ with the inverse of $T$. This operation corresponds to the pullback of the function $u$ under $T$.

From this definition, one immediately obtains the  identity:
\begin{equation}\label{eq:T-inv}
\tilde u\left(\tilde x, \tilde y \right) = u(x,y).
\end{equation}
This tautological identity can be interpreted as stating that when the transformation $T$ acts simultaneously on the function and its arguments, we obtain the same function again. In other words, $u(x,y)$ is an \textit{invariant} of the transformation $T$. More meaningful examples of projective invariants arise when the action of the transformation $T$ is extended to the partial derivatives of the function.

We denote the partial derivatives of $u$ as
\[
u_{i,j} := \frac{\partial^{i+j} u(x,y)}{\partial x^i\partial y^j}, \qquad
\tilde u_{i,j} := \frac{\partial^{i+j} \tilde u( x,  y)}{\partial   x^i    y^j}.
\]

We define the field of rational functions in the coordinates and their partial derivatives up to total order $d$ as
\[
J_d := \mathbb{R}(x, y, u, u_{1,0}, u_{0,1}, \dots, u_{i,j} \mid i + j \le d).
\]

The action of the projective group $PGL(3, \mathbb{R})$ can be naturally prolonged to the  field $J_d$ by setting
\[
T  (u_{i,j}(x,y)) :=  \tilde u_{i,j}(x,y),
\]
where the derivatives $\tilde u_{i,j}$ are computed via the chain rule from the transformed function $\tilde u(x,y) = u(T^{-1}(x,y))$. This prolonged action respects the differential structure and plays a central role in the definition of differential invariants.

In this work, we restrict our attention to first-order invariants, i.e., to the case $d=1$. The transformation rules for the first-order derivatives are given explicitly in the following result.

\begin{theorem}\label{thm:first-prol}
Let $T$ be the projective transformation. Then the partial derivatives of the transformed function $\tilde u$ are given by:
\begin{align*}
\tilde u_{1,0}(\tilde x, \tilde y) &=
\frac{c_1 x + c_2 y + c_3}{D} \left(
  -\det\begin{vmatrix} b_1 & b_2 \\ c_1 & c_2 \end{vmatrix} (u_{1,0} x + u_{0,1} y)
  +\det\begin{vmatrix} b_2 & b_3 \\ c_2 & c_3 \end{vmatrix} u_{1,0}
  +\det\begin{vmatrix} b_1 & b_3 \\ c_1 & c_3 \end{vmatrix} u_{0,1}
\right), 
\\[4pt]
\tilde u_{0,1}(\tilde x, \tilde y) &=
\frac{c_1 x + c_2 y + c_3}{D} \left(
  \det\begin{vmatrix} a_1 & a_2 \\ c_1 & c_2 \end{vmatrix} (u_{1,0} x + u_{0,1} y)
  -\det\begin{vmatrix} a_2 & a_3 \\ c_2 & c_3 \end{vmatrix} u_{1,0}
  +\det\begin{vmatrix} a_1 & a_3 \\ c_1 & c_3 \end{vmatrix} u_{0,1}
\right).
\end{align*}
\end{theorem}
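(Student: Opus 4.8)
The plan is to reduce the statement to the chain rule applied to the tautological identity~\eqref{eq:T-inv}. Differentiating $\tilde u(\tilde x,\tilde y)=u(x,y)$ separately with respect to $x$ and to $y$ --- where on the left $\tilde x,\tilde y$ are regarded as functions of $x,y$ --- yields the $2\times2$ linear system
\[
\begin{pmatrix}
\tilde x_x & \tilde y_x\\[2pt]
\tilde x_y & \tilde y_y
\end{pmatrix}
\begin{pmatrix}\tilde u_{1,0}(\tilde x,\tilde y)\\[2pt] \tilde u_{0,1}(\tilde x,\tilde y)\end{pmatrix}
=
\begin{pmatrix}u_{1,0}\\[2pt] u_{0,1}\end{pmatrix},
\]
whose coefficient matrix is the transpose of the Jacobian matrix of $T$ and hence has determinant $J(T)=D/(c_1x+c_2y+c_3)^3$. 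Since $A\in GL(3,\mathbb{R})$ gives $D\ne0$ and $c_1x+c_2y+c_3\ne0$ on the domain of $T$, this matrix is invertible, and Cramer's rule expresses $\tilde u_{1,0}(\tilde x,\tilde y)$ and $\tilde u_{0,1}(\tilde x,\tilde y)$ as $J(T)^{-1}$ times a $2\times2$ determinant whose columns are built from $u_{1,0},u_{0,1}$ and from two of the four first partials of $\tilde x,\tilde y$. That $2\times2$ minors (cofactors) of $A$ end up appearing is to be expected from $\tilde u=u\circ T^{-1}$ together with $A^{-1}=D^{-1}\operatorname{adj}(A)$, twisted by the Jacobian factor introduced by the prolongation.

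Next I would compute those four partials explicitly and simplify. Setting $\delta:=c_1x+c_2y+c_3$ and differentiating $\tilde x=(a_1x+a_2y+a_3)/\delta$ and $\tilde y=(b_1x+b_2y+b_3)/\delta$ by the quotient rule, each of $\tilde x_x,\tilde x_y,\tilde y_x,\tilde y_y$ should collapse --- the leading monomial in the numerator of the quotient-rule expression cancelling against its counterpart --- to a single ratio of the form (a $2\times2$ minor of the $a_i,c_i$, resp. of the $b_i,c_i$, affine-linear in $x,y$) over $\delta^{2}$; for instance $\tilde y_y=\bigl((b_2c_1-b_1c_2)x+(b_2c_3-b_3c_2)\bigr)/\delta^{2}$. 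Substituting the four such expressions into the Cramer formulas, the factor $J(T)^{-1}=\delta^{3}/D$ absorbs two of the three powers of $\delta$ in the denominator, leaving precisely the common prefactor $(c_1x+c_2y+c_3)/D$ of the statement; grouping the surviving terms by the monomials $u_{1,0}x+u_{0,1}y$, $u_{1,0}$, $u_{0,1}$ and reading off their coefficients as the indicated $2\times2$ determinants then produces the two asserted formulas.

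I do not anticipate a genuine conceptual obstacle: the real work is the bookkeeping in this last step --- keeping straight which $2\times2$ minor multiplies which of the three monomials, and fixing every sign consistently, in particular the sign incurred when a minor such as $b_1c_2-b_2c_1$ is rewritten with its columns in the order produced by the quotient-rule computation. To guard against sign slips I would check the affine specialization $c_1=c_2=0$, $c_3=1$: there $\delta\equiv1$, $D=\det A$, the minors $\det\begin{vmatrix}b_1&b_2\\c_1&c_2\end{vmatrix}$ and $\det\begin{vmatrix}a_1&a_2\\c_1&c_2\end{vmatrix}$ vanish, and the two formulas must reduce to the classical cogredient law $\tilde u_{1,0}=(b_2u_{1,0}-b_1u_{0,1})/D$, $\tilde u_{0,1}=(a_1u_{0,1}-a_2u_{1,0})/D$ for the transformation of a gradient under an affine map. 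The degree-one homogeneity in $\delta$ visible in the prefactor is, moreover, exactly what is needed later to sort these expressions by projective weight in Sections~2 and~3.
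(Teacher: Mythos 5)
Your proposal follows essentially the same route as the paper's own proof: differentiate the tautological identity $\tilde u(\tilde x,\tilde y)=u(x,y)$ in $x$ and $y$, solve the resulting $2\times2$ system by Cramer's rule with determinant $J(T)$, then substitute the quotient-rule expressions for the partials of $\tilde x,\tilde y$ and simplify, so it is correct in approach and detail. One remark worth keeping: your affine sanity check (or the full computation) gives the coefficient of $u_{0,1}$ in the first formula as $-(b_1c_3-b_3c_1)$, whereas the statement prints $+(b_1c_3-b_3c_1)$; the second formula comes out exactly as stated, so the discrepancy points to a sign typo in the printed theorem rather than a flaw in your argument.
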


\begin{proof}
Differentiating the identity~(\ref{eq:T-inv}) with respect to $x$ and $y$ yields:
$$
\begin{cases}
\partial_x(\tilde u(\tilde x, \tilde y ))=u_{10}=\tilde u_{1,0}(\tilde x, \tilde y ) \partial_x(\tilde x)+\tilde u_{0,1}(\tilde x, \tilde y ) \partial_x(\tilde y),\\
\partial_y(\tilde u(\tilde x, \tilde y ))=u_{01}=\tilde u_{1,0}(\tilde x, \tilde y ) \partial_y(\tilde x )+\tilde u_{0,1}(\tilde x, \tilde y )  \partial_y(\tilde y)).
\end{cases}
$$
By solving the system, we find that 
$$
\tilde u_{1,0}=\frac{1}{J(T)}\begin{vmatrix} u_{10} &  \partial_x(T(y)) \\ u_{01} & \partial_y(T(y))\end{vmatrix}, \tilde u_{0,1}=\frac{1}{J(T)}\begin{vmatrix} \partial_x(T(x)) &  u_{10} \\ \partial_y(T(x)) & u_{01}\end{vmatrix}.
$$
After simplifications, we  obtain  the statement of the theorem.
\end{proof}

\subsection{Projective Differential Invariants}
Let
\[
F = F(x, y, u, u_{1,0}, u_{0,1}, \dots, u_{p,q} ) \in J_{p+q},
\]
be a rational function in the coordinates and derivatives. We now define the main concept of this paper.

\begin{definition}
A function $F$ is called a {(relative) projective differential invariant} of order $p+q$ and weight $\omega$ if for every projective  transformation $T $ the following identity holds:
\[
F(T(x), T(y), \tilde u, \tilde u_{1,0}, \tilde u_{0,1}, \dots, \tilde u_{p,q}) = J(T)^\omega F(x, y, u, u_{1,0}, u_{0,1}, \dots, u_{p,q}).
\]
If $\omega = 0$, then $F$ is called an \textit{absolute projective differential invariant}.
\end{definition}

Note that the weight~$\omega$ is always a rational number, since all expressions involved are constructed using rational operations.

Let $\mathcal{R}_d$ denote the field of all relative projective differential invariants of order at most $d$, i.e., functions $F \in J_d$ satisfying the invariance condition for some weight $\omega$:
\[
\mathcal{R}_d := \{ F \in J_d \mid \exists \omega \in \mathbb{R} \text{ such that } F \text{ is a projective differential invariant of weight } \omega \}.
\]
Let $\mathcal{I}_d \subset \mathcal{R}_d$ denote the subfield of all absolute differential invariants:
\[
\mathcal{I}_d := \{ F \in \mathcal{R}_d \mid \omega = 0 \}.
\]

The recent work~\cite{Olver2023} by P.~Olver provides a system of generators for the field $\mathcal{I}_d$. However, the higher-order derivatives that appear in such invariants are known to be {numerically unstable}, especially in digital image applications.

To overcome this difficulty,  we propose an alternative approach: we focus on joint projective differential invariants, which depend only on the function values and their first derivatives at a finite set of points. This construction is computationally more stable due to the use of only first-order partial derivatives. Moreover, these joint invariants admit a clear geometric interpretation in terms of gradient lines.

\subsection{Joint First-Order Projective Differential Invariants}

As discussed earlier, an effective alternative is to consider \textit{joint} projective differential invariants. These are invariants of the diagonal action of the projective group on several copies of the plane, where each copy is equipped only with first-order differential data (i.e., gradient vectors). 

Let 
$
PGL(3,\mathbb{R})
$
act diagonally on the configuration space 
$
(\mathbb{R}^{2})^n
$
by applying the same projective transformation to each point:
\begin{equation}
T: (x_i, y_i) \;\longmapsto\;
\left(
  \frac{a_1x_i + a_2y_i + a_3}{c_1x_i + c_2y_i + c_3},\;
  \frac{b_1x_i + b_2y_i + b_3}{c_1x_i + c_2y_i + c_3}
\right), \qquad i=1, \dots, n.
\end{equation}

The \textit{total Jacobian} of this transformation is given by the product of individual Jacobians at each point:
\[
J(T) = \prod_{i=1}^{n} J_i(T), \qquad
J_i(T) = \frac{D}{\bigl(c_1x_i + c_2y_i + c_3\bigr)^3}.
\]

For each point $(x_i, y_i)$, we consider a scalar function $u^{(i)} = u(x_i, y_i)$ with first-order derivatives:
\[
p_i := \frac{\partial u^{(i)}}{\partial x_i}, \qquad
q_i := \frac{\partial u^{(i)}}{\partial y_i}, \qquad i = 1,\dots,n.
\]
These quantities can be interpreted as components of the gradient vector at each point. Define the field
\[
J^{(n)}_1 := \mathbb{R}\bigl(x_1, y_1, p_1, q_1,\; \dots,\; x_n, y_n, p_n, q_n\bigr),
\]
consisting of rational functions in all $4n$ variables. The action of $PGL(3,\mathbb{R})$ on $(x_i, y_i)$ extends to $(p_i, q_i)$ via the first prolongation formulas (cf. Theorem~\ref{thm:first-prol}), and thus defines a group action on the field $J^{(n)}_1$.

Let us denote by
$\mathcal{I}_{n,0}$ the field of absolute first-order joint projective invariants, and by 
$\mathcal{I}_n$ the field of relative first-order joint projective invariants.

Both fields are subfields of $J^{(n)}_1$ and are stable under the action of $PGL(3,\mathbb{R})$. By construction, we have:
\[
\mathcal{I}_{n,0} \subset \mathcal{I}_n \subset J_1^{(n)}.
\]


It follows from Rosenlicht’s theorem on the field of invariants of algebraic group actions (cf. \cite{Ros}, \cite{Sp}) that the transcendence degree of the field $\mathcal{I}_{n,0},n \geq 3$   equals
\[
\operatorname{trdeg}_{\mathbb{R}} \mathcal{I}_{n,0} = 4n - \dim PGL(3,\mathbb{R}) = 4n - 8.
\]
This estimate holds under the assumption that the action of $PGL(3,\mathbb{R})$ on the configuration space $J_1^{(n)}$ is {generically free}, i.e., for generic configurations of points and gradient vectors, the stabilizer in $PGL(3,\mathbb{R})$ is trivial. Starting from $n \geq 3$, the diagonal action of $PGL(3,\mathbb{R})$ on $(\mathbb{R}^{2})^n$ together with generic first-order data has trivial stabilizer, and thus the conditions of Rosenlicht’s theorem are satisfied.
As a consequence, for any $n \geq 3$ the field of absolute joint first-order projective invariants is a purely transcendental extension of $\mathbb{R}$ of degree $4n - 8$.


\section{The Field of Absolute Invariants}

In the section, we explicitly construct a complete system of algebraically independent generators for the field of absolute first-order joint projective invariants~$\mathcal{I}_{n,0}$ for small values of $n = 2, 3, 4, 5, 6$, and describe the general construction for $n \geq 7$.

\subsection{Geometric Setup and Notation}

Let $A_i = (x_i, y_i, 1) \in \mathbb{R}^3$ be the homogeneous coordinates of $n$ points in general position in the projective plane~$\mathbb{P}^2$, corresponding to affine points $(x_i, y_i) \in \mathbb{R}^2$. These vectors represent points as equivalence classes under scalar multiplication:
\[
(x_i : y_i : 1) \sim \lambda (x_i : y_i : 1), \quad \lambda \in \mathbb{R}^\times.
\]

Given three such points $A_i, A_j, A_k$, define the {signed double area} of the corresponding triangle as the determinant:
\[
\delta_{ijk} := \det(A_i, A_j, A_k) =
\begin{vmatrix}
x_i & x_j & x_k \\
y_i & y_j & y_k \\
1   & 1   & 1
\end{vmatrix}.
\]

This quantity is a classical relative invariant of the weight $\frac{1}{3}$  and plays a fundamental role in the construction of joint invariants.



Now let $u^{(i)} = u(x_i, y_i)$ be a smooth scalar function defined in a neighborhood of the affine point $(x_i, y_i)$. Its gradient is denoted by
\[
\nabla u^{(i)} := \left( p_i, q_i \right) = \left( \frac{\partial u^{(i)}}{\partial x_i}, \frac{\partial u^{(i)}}{\partial y_i} \right).
\]

The affine equation of the line orthogonal to the gradient vector $\nabla u^{(i)}$ and passing through $(x_i, y_i)$ is given by:
\[
p_i (x - x_i) + q_i (y - y_i) = 0,
\quad \text{or equivalently} \quad
p_i x + q_i y + c_i = 0,
\]
{where } $c_i := -p_i x_i - q_i y_i.$

We associate to this line the homogeneous vector
\[
\ell_i := (p_i, q_i, c_i)\in \mathbb{R}^3,
\]
which represents a projective line in~$\mathbb{P}^2$. These lines will be referred to as \textit{gradient lines} associated to the function $u$ at the corresponding points.

This construction naturally provides a pair of geometric objects for each point $A_i$: a point in $\mathbb{P}^2$ and a line $\ell_i$ passing through it and orthogonal to its gradient  line. This point-line data will be the foundational ingredient in constructing joint first-order projective invariants.

\begin{lemma}\label{lem:point-line-action}
Let \(G\in GL(3,\mathbb{R})\) represent a projective transformation  
\(
T = [G].
\)
Then the action of \(T\) on projective points and lines is given by:
$$
\begin{array}{ll}
(i) &  \text{ A homogeneous point } A=(X:Y:Z)^{\mathsf{T}} \in \mathbb{R}^3\setminus\{0\} \text{ transforms by the {left} matrix }\\
    		& 		\text{multiplication  }		A \;\longmapsto\; G A.
      \\
(ii) & \text{ A homogeneous line } \ell = (p, q, r) \in (\mathbb{R}^3)^*  
      \text{ transforms by the {right} multiplication:}\\
			& 		
           \ell \;\longmapsto\; \ell\, G^{-1},
      \quad \text{or equivalently} \quad
      \ell^{\mathsf{T}} \mapsto G^{-T} \ell^{\mathsf{T}}.
    \end{array}
$$

\end{lemma}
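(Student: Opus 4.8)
The plan is to prove both parts directly from the definition of the projective action on $\mathbb{P}^2$ and $(\mathbb{P}^2)^*$ together with the incidence relation, treating (i) as essentially a restatement of how $PGL(3,\mathbb{R})$ is defined to act and (ii) as the consequence of requiring that incidence be preserved.

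For part (i), I would simply recall that the action of $T=[G]$ on a projective point was \emph{defined} in the preliminaries by $[X:Y:Z]\mapsto[GX]$, i.e.\ on homogeneous representatives $A\mapsto GA$ by left multiplication; one only needs to check this is well defined on equivalence classes, which follows since $G(\lambda A)=\lambda(GA)$ and since replacing $G$ by $\mu G$ replaces $GA$ by $\mu(GA)$, so the resulting point of $\mathbb{P}^2$ is unchanged. This is a one-line verification.

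For part (ii), the key idea is that a line $\ell$ must be sent to the line $T(\ell)$ characterized by the property that a point $A$ lies on $\ell$ if and only if $T(A)=GA$ lies on $T(\ell)$; this is forced by functoriality/naturality of the construction, since $T$ is a bijection of $\mathbb{P}^2$ carrying collinear points to collinear points. So I seek a row vector $\ell'$ with $\ell' (GA)=0$ for exactly those $A$ with $\ell A=0$. Taking $\ell' = \ell G^{-1}$ works, because $\ell' (GA) = \ell G^{-1} G A = \ell A$, so the zero sets coincide; hence $T(\ell)=\ell G^{-1}$, i.e.\ right multiplication by $G^{-1}$. Transposing gives $\ell^{\mathsf T}\mapsto (\ell G^{-1})^{\mathsf T} = G^{-\mathsf T}\ell^{\mathsf T}$, the stated dual-point transformation law. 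I would also remark that this is well defined up to scalars in the same way as (i) (scaling $G$ or $\ell$ only rescales $\ell G^{-1}$), so it descends to $(\mathbb{P}^2)^*$.

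The only genuine subtlety — the "main obstacle," though it is a mild one — is justifying that the transformed line is \emph{uniquely} determined by the incidence-preservation requirement, i.e.\ that a projective line in $\mathbb{P}^2$ is determined by its point set, so that exhibiting $\ell G^{-1}$ as a covector with the correct zero locus actually identifies the image line rather than merely a candidate. This follows because two nonproportional covectors have distinct kernels (each kernel being a $2$-dimensional subspace of $\mathbb{R}^3$, hence a projective line containing infinitely many points, and two distinct $2$-planes meet in only a $1$-dimensional subspace), so the correspondence between lines and their point sets is injective. With that observed, the computation $\ell G^{-1}\cdot GA = \ell A$ finishes the argument, and no further calculation is needed.
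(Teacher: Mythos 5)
Your proposal is correct and follows essentially the same route as the paper: part (i) is the definitional left-multiplication action checked to be well defined on equivalence classes, and part (ii) derives $\ell\mapsto \ell G^{-1}$ from preservation of the incidence relation via $\ell A=(\ell G^{-1})(GA)$, then transposes to get $\ell^{\mathsf T}\mapsto G^{-\mathsf T}\ell^{\mathsf T}$. Your extra remark that a line is uniquely determined by its point set is a small refinement the paper leaves implicit, but it does not change the argument.
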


\begin{proof}
$(i)$    
Let \(A \in \mathbb{R}^3 \setminus \{0\}\) be a column vector representing a point in the projective plane \(\mathbb{P}^2\) via the equivalence class \([A] = \{\lambda A \mid \lambda \in \mathbb{R}^{\times}\}\).  
A matrix \(G \in GL(3,\mathbb{R})\) acts on such vectors by left multiplication: \(A \mapsto G A\).
This action is compatible with projective equivalence: if \(A' = \lambda A\), then \(G A' = G (\lambda A) = \lambda (G A)\), so the image point still represents the same projective point in \(\mathbb{P}^2\).

Therefore, the action \([A] \mapsto [G A]\) is well-defined on projective points and descends to the quotient group \(PGL(3,\mathbb{R}) = GL(3,\mathbb{R})/\mathbb{R}^\times\).

\smallskip
$(ii)$  
Let \(\ell = (p, q, r)\) be a row vector (covector) representing the homogeneous linear equation \(pX + qY + rZ = 0\), which defines a line in \(\mathbb{P}^2\). This means that the point \(A \in \mathbb{P}^2\) lies on the line \(\ell\) if and only if \(\ell A = 0\).

Let us examine how the incidence relation is preserved under a projective transformation \(T = [G]\). Under this transformation, the point \(A\) becomes \(G A\). For the point \(G A\) to lie on the transformed line \(\ell'\), we must have:
\[
\ell' (G A) = 0.
\]
But since originally \(\ell A = 0\), we want to choose \(\ell'\) such that this condition remains true after the transformation. Substituting:
\[
\ell A = (\ell G^{-1}) (G A) = 0.
\]
This suggests that \(\ell'\) must be defined by:
\[
\ell' = \ell G^{-1}.
\]

That is, the correct transformation rule for lines (row vectors) is right multiplication by the inverse of \(G\): \(\ell \mapsto \ell G^{-1}\). This ensures that incidence is preserved:
\[
\ell A = 0 \quad \Longleftrightarrow \quad \ell G^{-1} (G A) = 0 \quad \Longleftrightarrow \quad \ell' (G A) = 0.
\]

{Equivalently}, if we write the covector \(\ell\) as a column vector \(\ell^{\mathsf T}\), then the action becomes:
\[
\ell^{\mathsf T} \mapsto G^{-T} \ell^{\mathsf T}.
\]

This follows from the identity:
\[
(\ell G^{-1})^{\mathsf T} = (G^{-1})^{\mathsf T} \ell^{\mathsf T} = G^{-T} \ell^{\mathsf T}.
\]

Hence, both forms \(\ell \mapsto \ell G^{-1}\) (row vector) and \(\ell^{\mathsf T} \mapsto G^{-T} \ell^{\mathsf T}\) (column covector) describe the same transformation of the line under \(T\), and incidence is preserved.

\end{proof}

\subsection{The Case \(n=3\)}
We have  $
\operatorname{trdeg} \mathcal I_{3,0}=4\cdot3-8=4.
$
We now describe the field of absolute first-order joint projective invariants in the case of three points in general position.

\begin{theorem}
\label{thm:n3}
The field of absolute first-order joint projective invariants is the rational function field
\[
\mathcal{I}_{3,0}
\;=\;
\mathbb{R}\bigl(\zeta_{12},\zeta_{23},\zeta_{13},\tau\bigr),
\]
where
\[
\begin{aligned}
\zeta_{12} &:= \det(\ell_1, \ell_2, L_3), &
\zeta_{23} &:= \det(L_1, \ell_2, \ell_3), \\[4pt]
\zeta_{13} &:= \det(\ell_1, L_2, \ell_3), &
\tau    &:= \det(\ell_1, \ell_2, \ell_3)\cdot \delta_{123}.
\end{aligned}
\]
 are algebraically independent   absolute first-order joint projective invariants. 
\end{theorem}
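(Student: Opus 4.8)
The plan is to prove the statement in three stages: that $\zeta_{12},\zeta_{23},\zeta_{13},\tau$ are absolute joint invariants, that they are algebraically independent, and that they generate the whole field $\mathcal{I}_{3,0}$. Throughout I write $T=[G]$ with $G\in GL(3,\mathbb{R})$, $D=\det G$, and $\sigma_i:=c_1x_i+c_2y_i+c_3$ for the denominator at the $i$-th point. The first task is to record how the building blocks transform. From $GA_i=\sigma_i\,(\tilde x_i,\tilde y_i,1)^{\mathsf{T}}$ one gets $A_i\mapsto \sigma_i^{-1}GA_i$; by Lemma~\ref{lem:point-line-action} together with the identity $(Gu)\times(Gv)=D\,G^{-\mathsf{T}}(u\times v)$, each side line $L_k=A_i\times A_j$ transforms by $L_k\mapsto \tfrac{D}{\sigma_i\sigma_j}\,L_kG^{-1}$, and $\delta_{ijk}\mapsto \tfrac{D}{\sigma_i\sigma_j\sigma_k}\,\delta_{ijk}$. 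The decisive extra input is the behaviour of the gradient lines: since $\ell_i$ is the tangent at $A_i$ to a level curve of $u$ and the level curves of $\tilde u$ are the $T$-images of those of $u$, the line $\ell_i$ is carried to the corresponding line for $\tilde u$; comparing with Theorem~\ref{thm:first-prol} pins down the scalar and yields $\ell_i\mapsto \sigma_i\,\ell_iG^{-1}$.

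Given these laws, invariance is a short weight count. For $\zeta_{12}=\det(\ell_1,\ell_2,L_3)$ the three row scalings contribute the product $\sigma_1\cdot\sigma_2\cdot\tfrac{D}{\sigma_1\sigma_2}=D$, while pulling the common right factor $G^{-1}$ out of all three rows contributes $\det(G^{-1})=D^{-1}$; the two cancel, so $\zeta_{12}$ is fixed. The same computation handles $\zeta_{23}$ and $\zeta_{13}$ by symmetry, and for $\tau$ the factor $\tfrac{\sigma_1\sigma_2\sigma_3}{D}$ arising from $\det(\ell_1,\ell_2,\ell_3)$ cancels the factor $\tfrac{D}{\sigma_1\sigma_2\sigma_3}$ arising from $\delta_{123}$. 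Hence all four lie in $\mathcal{I}_{3,0}$, so $\mathbb{R}(\zeta_{12},\zeta_{23},\zeta_{13},\tau)\subseteq\mathcal{I}_{3,0}$. Algebraic independence I would then obtain either directly, by exhibiting one nonvanishing $4\times4$ minor of the Jacobian of $(\zeta_{12},\zeta_{23},\zeta_{13},\tau)$ with respect to four of the twelve coordinates at a convenient configuration, or, more conveniently, as a by-product of the normalization in the next step.

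For the reverse inclusion I would use a rational cross-section for the $PGL(3,\mathbb{R})$-action. For a generic configuration the four points $A_1,A_2,A_3$ and $B_1:=\ell_2\times\ell_3$ are in general position, so there is a unique $g\in PGL(3,\mathbb{R})$ carrying them to a fixed projective frame; this produces an explicit normal form with exactly four residual parameters (the two coordinates of the gradient covector $\ell_1$ at the fixed image of $A_1$, together with the scales of $\ell_2$ and $\ell_3$). Substituting this normal form into $\zeta_{12},\zeta_{23},\zeta_{13},\tau$ gives four explicit polynomials in those parameters whose Jacobian is generically nonvanishing — which re-proves algebraic independence — and the key remaining task is to invert this system, recovering the four parameters from the values of the invariants. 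Provided the inversion is single-valued, the cross-section meets each generic fibre of $(\zeta_{12},\zeta_{23},\zeta_{13},\tau)$ in exactly one point, so the four invariants separate generic $PGL(3,\mathbb{R})$-orbits; an absolute invariant, being constant on orbits, is then determined by its restriction to the section and hence is a rational function of the four generators, which combined with the bound $\operatorname{trdeg}_{\mathbb{R}}\mathcal{I}_{3,0}=4$ recalled above gives $\mathcal{I}_{3,0}=\mathbb{R}(\zeta_{12},\zeta_{23},\zeta_{13},\tau)$. The crux — and the only step that is not routine bookkeeping — is precisely this last point: one must arrange the frame so that the four given invariants restrict to an honest coordinate system on the cross-section, with no residual finite ambiguity in the reconstruction.
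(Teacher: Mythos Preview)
Your treatment of invariance and of algebraic independence is essentially the paper's: the paper also verifies invariance of $\zeta_{12}$ via Lemma~\ref{lem:point-line-action} and the cofactor identity for cross products, checks $\tau$ by multiplying the weight~$-\tfrac13$ factor $\det(\ell_1,\ell_2,\ell_3)$ against the weight~$+\tfrac13$ factor $\delta_{123}$, and then establishes independence by a direct Jacobian-rank computation. Your scalar bookkeeping with the $\sigma_i$ is in fact cleaner than the paper's, which suppresses those factors in the $\zeta_{12}$ step but reinstates them for $\tau$.

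Where you diverge is on generation. The paper's proof \emph{does not} argue the equality $\mathcal{I}_{3,0}=\mathbb{R}(\zeta_{12},\zeta_{23},\zeta_{13},\tau)$ at all: it stops after showing the four functions are absolute invariants whose Jacobian has rank $4$, and implicitly leans on the transcendence-degree count $4n-8=4$ from the Preliminaries. You are right to flag that this is not enough, and your cross-section strategy is the correct kind of argument to attempt.

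However, the ``crux'' you isolate---single-valuedness of the inversion---actually fails here, and this is not a matter of choosing the frame more cleverly. Writing $\alpha_{ij}:=\ell_i A_j$, one has $\zeta_{12}=\alpha_{12}\alpha_{21}$, $\zeta_{23}=\alpha_{23}\alpha_{32}$, $\zeta_{13}=\alpha_{13}\alpha_{31}$, and the determinant identity $\det(\ell_1,\ell_2,\ell_3)\det(A_1,A_2,A_3)=\det\bigl((\ell_iA_j)_{i,j}\bigr)$ together with $\alpha_{ii}=0$ gives $\tau=a+b$ where $a=\alpha_{12}\alpha_{23}\alpha_{31}$ and $b=\alpha_{13}\alpha_{21}\alpha_{32}$. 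Both $a$ and $b$ are absolute invariants (each factor $\alpha_{ij}$ picks up $\sigma_i/\sigma_j$, and these cancel cyclically), yet they satisfy $a+b=\tau$, $ab=\zeta_{12}\zeta_{23}\zeta_{13}$, so $a$ is a root of $t^2-\tau t+\zeta_{12}\zeta_{23}\zeta_{13}$. Since $\zeta_{12},\zeta_{23},\zeta_{13},\tau$ are algebraically independent, the discriminant $\tau^2-4\zeta_{12}\zeta_{23}\zeta_{13}$ is not a square in $\mathbb{R}(\zeta_{12},\zeta_{23},\zeta_{13},\tau)$, and hence $a\in\mathcal{I}_{3,0}\setminus\mathbb{R}(\zeta_{12},\zeta_{23},\zeta_{13},\tau)$. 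In your cross-section picture this is exactly the two-fold ambiguity you were worried about: the map from the slice to $(\zeta_{12},\zeta_{23},\zeta_{13},\tau)$ is generically $2:1$, with the two sheets interchanged by swapping $a\leftrightarrow b$. So the four listed invariants form a transcendence basis of $\mathcal{I}_{3,0}$---which is all the paper's proof actually delivers---but do not generate it as a field; one needs to adjoin $a$ (equivalently, replace $\tau$ by $a$, or add $a-b$) to obtain the full $\mathcal{I}_{3,0}$.
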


\begin{proof}
Let \(T \in PGL(3,\mathbb{R})\) be represented by a matrix \(G \in GL(3,\mathbb{R})\).  Consider, for example,
\[
\zeta_{12} = \det(\ell_1, \ell_2, L_3),
\]
where \(L_3 = A_1 \times A_2\). 

By Lemma~\ref{lem:point-line-action}, item $(ii)$, each gradient line transforms as \(\ell_j \mapsto G^{-T} \ell_j\). 
Furthermore, using the standard identity for the cross product of transformed points and item (i) of Lemma~\ref{lem:point-line-action}, we have:
\[
L_3 = A_1 \times A_2 \;\mapsto\; (G A_1) \times (G A_2) = (\det G)\, G^{-T} L_3.
\]
Therefore,
\[
\det(G^{-T}\ell_1, G^{-T}\ell_2, (\det G)\,G^{-T}L_3)
= (\det G^{-T}) (\det G)\, \det(\ell_1, \ell_2, L_3)
= \zeta_1,
\]
since \((\det G^{-T}) = (\det G)^{-1}\), and the total product becomes 1. Analogous computations confirm the invariance of \(\zeta_2\) and \(\zeta_3\).

 Note that
\[
\tau = \det(\ell_1, \ell_2, \ell_3) \cdot \det(A_1, A_2, A_3).
\]
Under \(T\), these quantities transform as follows, by items $(ii)$ and $(i)$ of Lemma~\ref{lem:point-line-action}, respectively:
\begin{align*}
&\det(\ell_1, \ell_2, \ell_3) \mapsto \frac{(c_1 x_1+c_2 y_1 +c_3)(c_1 x_2+c_2 y_2 +c_3)(c_1 x_3+c_2 y_3 +c_3)}{\det G} \det(\ell_1, \ell_2, \ell_3),\\
&\det(A_1, A_2, A_3) \mapsto \frac{\det G}{(c_1 x_1+c_2 y_1 +c_3)(c_1 x_2+c_2 y_2 +c_3)(c_1 x_3+c_2 y_3 +c_3)}\, \det(A_1, A_2, A_3),
\end{align*}
so the product \(\tau\) remains invariant.  We see  that each factor of the product
is a relative invariant of weight~\(-\tfrac{1}{3}\)  or \(\tfrac{1}{3}\).

 The explicit form of the invariants,  up to sign,  is as follows:
\[
\begin{aligned}
\zeta_{12} &=
 \bigl(p_1(x_1-x_2)+q_1(y_1-y_2)\bigr)
 \bigl(p_2(x_1-x_2)+q_2(y_1-y_2)\bigr),\\[6pt]
\zeta_{23} &=
 \bigl(p_2(x_2-x_3)+q_2(y_2-y_3)\bigr)
 \bigl(p_3(x_2-x_3)+q_3(y_2-y_3)\bigr),\\[6pt]
\zeta_{13} &=
 \bigl(p_3(x_1-x_3)+q_3(y_1-y_3)\bigr)
 \bigl(p_1(x_1-x_3)+q_1(y_1-y_3)\bigr),\\[6pt]
\end{aligned}
\]
$$
\tau= 
\begin{vmatrix}
 p_1 & q_1 & p_1x_1+q_1y_1\\
 p_2 & q_2 & p_2x_2+q_2y_2\\
 p_3 & q_3 & p_3x_3+q_3y_3
\end{vmatrix}\cdot \begin{vmatrix}
 x_1 & y_1 & 1\\
 x_2 & y_2 & 1\\
 x_3 & y_3 & 1
\end{vmatrix}.
$$
Direct computation shows that the Jacobian matrix of these four functions  has the rank 4. This implies that the invariants $\zeta_{12}, \zeta_{23}, \zeta_{13}, \tau$ are algebraically independent. 

\end{proof}

Geometrically, the triple determinant
\[
\zeta_{12} = \det(\ell_1, \ell_2, L_3)
\]
represents the oriented area of the triangle in the dual projective plane~$(\mathbb{P}^2)^*$, whose vertices correspond to the lines~$\ell_1$, $\ell_2$, and~$L_3$. These lines — the two gradient directions~$\ell_1$, $\ell_2$ and the side~$A_1A_2$ of the reference triangle (represented by~$L_3$) — define three distinct points in the dual space. The determinant~$\zeta_{12}$ measures the signed area of the triangle they span and thus encodes their relative position.

In particular, $\zeta_{12}$ vanishes precisely when the lines~$\ell_1$, $\ell_2$, and~$L_3$ are concurrent — that is, when the two gradient lines intersect on the side~$A_1A_2$. This concurrency condition is invariant under projective transformations, and~$\zeta_{12}$ captures it as a numerical invariant.

The same interpretation applies to the cyclic analogues
$\zeta_{23}$ and $\zeta_{13}$.

The invariant~$\tau$ simultaneously detects the non-degeneracy of two geometric configurations: the point triangle~$A_1A_2A_3$ in the projective plane~$\mathbb{P}^2$, and the gradient triangle formed by the lines~$\ell_1, \ell_2, \ell_3$ in the dual plane~$\mathbb{P}^{2*}$. It takes a nonzero value if and only if the points~$A_1, A_2, A_3$ are not collinear and the lines~$\ell_1, \ell_2, \ell_3$ are not concurrent or pairwise parallel.

\subsection{\textbf{The case \boldmath{$n=4$}}}

For four points in general position   we have 
$
\operatorname{trdeg} I_{4,0}= 8.
$
As in the case \(n=3\), we include the three basic invariants
\(\zeta_{12}, \zeta_{13}, \zeta_{23}\)
and the mixed invariant
\(\tau = \det(l_1, l_2, l_3) \cdot \det(A_1, A_2, A_3)\).
To complete the set of eight generators, we introduce three new invariants
\[
\zeta_{14},\quad \zeta_{24},\quad \zeta_{34}
\]
defined analogously as signed volumes of appropriate triangles in the dual projective plane.

In addition, we construct a fourth-order mixed invariant that generalizes \(\tau\) to four points:
\[
\sigma = 
\delta_{123} \, \delta_{124} \, \delta_{134} \, \delta_{234} \cdot
\det(l_1, l_2, l_3) \det(l_1, l_2, l_4)
\cdot \det(l_1, l_3, l_4) \det(l_2, l_3, l_4).
\]
The invariance of \(\sigma\) under projective transformations is established in the same way as in the case \(n=3\), by using the transformation rules from Lemma~\ref{lem:point-line-action}.

This leads to the following description of the field of invariants:
\begin{theorem}
The field of absolute first-order joint projective invariants is the purely transcendental field
\[
I_{4,0} \;=\;
\mathbb{R}\bigl(
  \zeta_{12},\zeta_{13},\zeta_{14},\zeta_{23},\zeta_{24},\zeta_{34},\;
  \tau,\;\sigma
\bigr).
\]
The eight invariants listed are algebraically independent, and form a transcendence basis of the field \(\mathcal I_{4,0}\).
\end{theorem}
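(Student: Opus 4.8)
The plan is to establish three things in turn: that all eight functions are absolute invariants; that they are algebraically independent over $\mathbb{R}$; and that they generate $\mathcal{I}_{4,0}$, the last point being the substantive one.

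\emph{Invariance.} Each $\zeta_{ij}$ has the form $\pm\det(\ell_i,\ell_j,A_i\times A_j)$, so the computation in the proof of Theorem~\ref{thm:n3} applies verbatim: by Lemma~\ref{lem:point-line-action} the two factors $G^{-T}\ell_i,G^{-T}\ell_j$ and the factor $(\det G)\,G^{-T}(A_i\times A_j)$ together contribute total weight $0$. For $\tau$ the argument is exactly that of Theorem~\ref{thm:n3}. For $\sigma$ one tracks the spatial denominators: under $T$ each $\delta_{ijk}$ is multiplied by $(\det G)\prod_{m\in\{i,j,k\}}(c_1x_m+c_2y_m+c_3)^{-1}$ and each $\det(\ell_i,\ell_j,\ell_k)$ by the reciprocal factor; since in $\sigma$ every one of the four points lies in exactly three of the four index-triples, the four $\delta$'s together contribute $(\det G)^4\prod_{m=1}^{4}(c_1x_m+c_2y_m+c_3)^{-3}$ and the four $\det(\ell,\ell,\ell)$'s contribute its exact reciprocal, so $\sigma$ is an absolute invariant.

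\emph{Independence.} As in the case $n=3$, it suffices to exhibit one point of $J^{(4)}_1$ at which the $8\times16$ Jacobian of $(\zeta_{12},\zeta_{13},\zeta_{14},\zeta_{23},\zeta_{24},\zeta_{34},\tau,\sigma)$ with respect to $(x_1,y_1,p_1,q_1,\dots,x_4,y_4,p_4,q_4)$ has rank $8$; this is a finite computation carried out at a conveniently chosen (rational, or symbolic-generic) configuration.

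\emph{Generation.} The Rosenlicht count recalled earlier gives $\operatorname{trdeg}_{\mathbb{R}}\mathcal{I}_{4,0}=4\cdot4-8=8$, so by the previous step the subfield $K:=\mathbb{R}(\zeta_{12},\zeta_{13},\zeta_{14},\zeta_{23},\zeta_{24},\zeta_{34},\tau,\sigma)$ has transcendence degree $8$ inside $\mathcal{I}_{4,0}$, and it only remains to show the extension $\mathcal{I}_{4,0}/K$ is trivial. I would do this with a rational cross-section. Since $PGL(3,\mathbb{R})$ acts simply transitively on ordered projective frames, a generic quadruple $(A_1,A_2,A_3,A_4)$ is carried by a \emph{unique} transformation to the standard frame, say the affine points $(0,0),(1,0),(0,1),(1,1)$; the residual stabilizer being trivial, $\mathcal{I}_{4,0}$ is canonically isomorphic to the field of rational functions on the $8$-dimensional slice $S$ obtained by fixing the four points, whose coordinates are the (unconstrained) gradient components, so $\mathbb{R}(S)=\mathbb{R}(p_1,q_1,\dots,p_4,q_4)$. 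On $S$ the Lagrange identity gives $\zeta_{ij}=c_{ij}\,\mu_i^{(j)}\mu_j^{(i)}$ with $\mu_i^{(j)}:=p_i(x_j-x_i)+q_i(y_j-y_i)$ a fixed linear form and $c_{ij}\in\mathbb{R}^\times$ a constant, while $\tau|_S$ and $\sigma|_S$ equal, up to nonzero constants, $\det(\ell_1,\ell_2,\ell_3)$ and $\prod_{\{i,j,k\}}\det(\ell_i,\ell_j,\ell_k)$, each multilinear and of odd degree in every gradient pair it involves (degree one for $\tau$, degree three for $\sigma$). One then checks that the polynomial map $S\to\mathbb{R}^8$ defined by the eight restricted invariants is birational: the six products recover the forms $\mu_i^{(j)}$ up to scalars, the odd-degree invariants $\tau,\sigma$ remove the scalar ambiguities, and each $(p_i,q_i)$ is then the unique solution of the over-determined consistent linear system formed by $\mu_i^{(j)}$, $j\ne i$. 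Hence every $p_i,q_i$ lies in $K$, so $K=\mathbb{R}(S)=\mathcal{I}_{4,0}$ is purely transcendental with the eight invariants as a transcendence basis; conceptually this is the completeness of the normalized invariants attached to a frame-normalizing moving frame.

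The main obstacle is the birationality verification in the generation step: the relations $\zeta_{ij}=c_{ij}\mu_i^{(j)}\mu_j^{(i)}$ realize the six $\zeta$'s as edge labels of the complete graph $K_4$, and disentangling the twelve linear values $\mu_i^{(j)}$ from these products together with the trilinear $\tau$ and the degree-four $\sigma$ — above all, fixing every sign so that the invariant map is genuinely injective on a dense open subset and not merely dominant — is the delicate part; everything else is routine determinant bookkeeping.
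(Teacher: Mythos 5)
Your first two steps coincide with what the paper actually does for $n=4$: invariance of $\sigma$ by exactly the weight bookkeeping you describe (each point lies in three of the four triples, so the $\delta$'s and the line determinants cancel), and algebraic independence by a rank computation on the Jacobian. Those parts are fine. The genuine gap is in the generation step, and it is not only the sign-fixing issue you flag at the end. On the slice $S$ with the points normalized to $(0,0),(1,0),(0,1),(1,1)$, the claim that ``the six products recover the forms $\mu_i^{(j)}$ up to scalars'' is not correct: the six equations $\zeta_{ij}=c_{ij}\,\mu_i^{(j)}\mu_j^{(i)}$ impose only six conditions on the eight gradient coordinates $(p_1,q_1,\dots,p_4,q_4)$, so the fiber of the product map is generically a \emph{two-parameter} family, not a finite set of scalar-related configurations; the per-point rescalings $(p_i,q_i)\mapsto\lambda_i(p_i,q_i)$ that preserve all six products form only the global group $\lambda_1=\dots=\lambda_4=\pm1$, which accounts for none of this continuum. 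Hence $\tau$ and $\sigma$ are not merely removing discrete sign ambiguities: they must cut a positive-dimensional family down to a single point, and whether they do so with degree one (rather than leaving a multi-point generic fiber, i.e.\ a finite-index subfield) is precisely the birationality assertion that your sketch leaves unverified and that you yourself call ``the delicate part.''

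Until that inversion is actually carried out --- for instance by exhibiting rational formulas expressing each $p_i,q_i$ on $S$ in terms of the eight restricted invariants, or by an elimination/resultant (or degree) computation showing the map $S\dashrightarrow\mathbb{R}^8$ has a one-point generic fiber --- what your argument establishes is only that the eight invariants are algebraically independent and, by the Rosenlicht count $\operatorname{trdeg}\mathcal I_{4,0}=8$, form a transcendence basis, so that $\mathcal I_{4,0}$ is algebraic over $K$. To be fair, the paper's own proof stops at the same point (invariance plus a Jacobian rank computation) and does not address field equality at all; your cross-section identification $\mathcal I_{4,0}\cong\mathbb{R}(p_1,q_1,\dots,p_4,q_4)$, which already shows abstractly that $\mathcal I_{4,0}$ is purely transcendental of degree $8$, is a genuine conceptual addition --- but the substantive claim that these particular eight invariants generate the whole field remains unproved in your proposal.
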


The algebraic independence of these invariants follows from direct computation.

\subsection{The cases \boldmath{$n=5$} and \boldmath{$n=6$}}

For $n=5$ and $n=6$, the construction of absolute joint projective invariants follows the same principles as in the case $n=4$: we combine determinants of triples of gradient lines (dual points in the dual plane) with determinants of point triples (primal areas), forming mixed expressions that are invariant under the diagonal action of the projective group .
In total, there are 12 algebraically independent absolute first-order invariants for \(n = 5\), and 16 for \(n = 6\).

We omit the detailed constructions and state only the final result.

\begin{theorem}
The structure of the invariant field in the cases \(n = 5\) and \(n = 6\) is described as follows:
\begin{enumerate}\itemsep=1em

\item[\textnormal{(i)}] For $n=5$, the field of absolute first-order joint projective invariants is the purely transcendental field
\[
I_{5,0}
 \;=\;
 \mathbb{R}\!\bigl(
   \zeta_{12},\zeta_{13},\zeta_{14},\zeta_{15},\;
   \zeta_{23},\zeta_{24},\zeta_{25},\;
   \zeta_{34},\zeta_{35},\zeta_{45};\;
   \tau,\sigma
 \bigr),
\]
where the twelve listed invariants are algebraically independent.

\item[\textnormal{(ii)}] For $n=6$, the field of absolute first-order joint projective invariants is the purely transcendental field
\[
I_{6,0}
 \;=\;
 \mathbb{R}\!\bigl(
   \zeta_{12},\zeta_{13},\zeta_{14},\zeta_{15},\zeta_{16},\;
   \zeta_{23},\zeta_{24},\zeta_{25},\zeta_{26},\;
   \zeta_{34},\zeta_{35},\zeta_{36},\;
   \zeta_{45},\zeta_{46},\zeta_{56};\;
   \tau
 \bigr),
\]
where the sixteen listed invariants are algebraically independent.
\end{enumerate}
\end{theorem}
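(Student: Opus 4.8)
## Proof proposal for the structure of $\mathcal{I}_{5,0}$ and $\mathcal{I}_{6,0}$

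The plan is to mimic the strategy of the $n=3$ and $n=4$ cases, splitting the argument into two independent tasks: (a) verifying that each listed expression is genuinely an absolute joint projective invariant, and (b) proving that the listed family is algebraically independent and of the right cardinality, so that it forms a transcendence basis of $\mathcal{I}_{n,0}$.

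For task (a), I would observe that every generator is, by construction, a product of factors of the two types already analyzed: determinants $\det(\ell_i,\ell_j,\ell_k)$ of triples of gradient lines and determinants $\det(A_i,A_j,A_k)=\delta_{ijk}$ of triples of points, together with mixed determinants $\zeta_{ij}=\det(\ell_i,\ell_j,L_k)$ where $L_k=A_i\times A_j$. By Lemma~\ref{lem:point-line-action}, under a representative $G\in GL(3,\mathbb{R})$ a point determinant picks up the factor $\det G \cdot \prod (c_1x+c_2y+c_3)^{-1}$ over the three indices involved, a gradient-line determinant picks up the reciprocal factor $(\det G)^{-1}\prod(c_1x+c_2y+c_3)$, and each $\zeta_{ij}$ is already absolutely invariant (as shown in the proof of Theorem~\ref{thm:n3}, the $G^{-T}$ from the two lines and the $(\det G)\,G^{-T}$ from $L_k$ combine to give total factor $1$, and the point-dependent scalars cancel between $\ell_i,\ell_j$ and $L_k$). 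Thus it suffices to check, for each of $\tau$ and $\sigma$, that the multiset of point indices carried by the $\delta$-factors matches the multiset carried by the line-determinant factors, so that all occurrences of each $(\det G)^{\pm1}$ and each $(c_1x_i+c_2y_i+c_3)^{\pm1}$ cancel. For $\tau=\det(\ell_1,\ell_2,\ell_3)\cdot\delta_{123}$ this is the $n=3$ computation verbatim; for $\sigma$ one checks that each index $1,2,3,4$ appears exactly three times among the four $\delta$-factors and exactly three times among the six line-triple factors — indeed $\binom{3}{2}=3$ — so the weights $+\tfrac13$ and $-\tfrac13$ balance out. For $n=6$ the same bookkeeping shows $\tau$ alone (together with the $\zeta_{ij}$) is enough because the transcendence degree count no longer forces an extra $\sigma$-type invariant.

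For task (b), the dimension count is the organizing principle. By Rosenlicht's theorem (recalled in the preliminaries), $\operatorname{trdeg}_{\mathbb{R}}\mathcal{I}_{n,0}=4n-8$, which equals $12$ for $n=5$ and $16$ for $n=6$; the lists contain exactly $\binom{n}{2}+2$ and $\binom{n}{2}+1$ elements respectively, i.e.\ $10+2=12$ and $15+1=16$, matching. It remains to prove algebraic independence of the listed invariants. The cleanest route is to exhibit a point in $(\mathbb{R}^2)^n$ equipped with generic gradient data at which the Jacobian matrix of the listed functions, with respect to a suitably chosen subset of $4n-8$ of the $4n$ coordinates (or, since these are honest invariants on the quotient, with respect to local coordinates on the quotient obtained by normalizing the $8$-dimensional group orbit), has full rank $4n-8$. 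Concretely I would fix a projective frame by placing $A_1,A_2,A_3,A_4$ at standard reference positions and using the remaining freedom to normalize, say, the direction of $\ell_1$; then the residual coordinates $x_5,y_5,\dots,x_n,y_n,p_1,q_1,\dots$ become free, and one computes the Jacobian of the $\zeta_{ij},\tau,(\sigma)$ with respect to these. Since each $\zeta_{ij}$ depends on a distinct pair of indices, the Jacobian is close to block-triangular, and full rank can be read off after the routine differentiation. Because these are rational (not polynomial) functions, it is enough that the Jacobian determinant of the chosen square submatrix be a nonzero rational function — verified at one explicit numerical point.

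The main obstacle is task (b): ensuring the chosen $4n-8$ coordinates actually see all $4n-8$ invariants independently, rather than producing a rank drop because, for instance, the invariants $\zeta_{i5},\zeta_{i6},\dots$ attached to the higher-indexed points do not separate the two coordinates of that point from the one genuinely new pair of invariants it contributes. This is the reason the statement is phrased for each small $n$ separately and the general $n\ge 7$ construction is deferred: one must pick, for each $n$, the $4n-8$ invariants forming a transcendence basis (the theorem asserts the full list is algebraically independent, but for $n\ge 6$ the list already has exactly $4n-8$ elements only when an extra $\sigma$-type invariant is dropped, so some care is needed about which sublist is claimed), and check the rank condition. In practice this reduces to a finite symbolic computation, which is what "the algebraic independence follows from direct computation" refers to; I would carry it out with a computer algebra system, choosing generic small integer values for the free coordinates and gradients and confirming the relevant Jacobian minor is nonzero.
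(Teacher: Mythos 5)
Your proposal follows essentially the same route as the paper: invariance of $\tau$, $\sigma$ and the $\zeta_{ij}$ by the same weight-and-scalar bookkeeping via Lemma~\ref{lem:point-line-action} as in the $n=3,4$ cases, and algebraic independence of the $4n-8=12$ (resp.\ $16$) listed generators by checking full rank of a Jacobian minor at a suitable specialization, which is exactly the symbolic computation the paper invokes (and carries out explicitly only for $n\ge 7$). Only trivial slips: $\sigma$ contains four, not six, line-triple factors, and for $n=6$ nothing needs to be dropped since the stated list already has exactly $16$ elements.
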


The algebraic independence of the listed invariants can be verified by symbolic computation in any standard computer algebra system.

\subsection{The case \boldmath{$n>6$}}
To construct a transcendence basis for the field of absolute joint projective invariants of \(n\) points in general position, we proceed inductively, following the pattern observed in the cases \(n \leq 6\). We introduce the following families of invariants:

\begin{align*}
&Z_2 = \{\zeta_{12}\},\\
&Z_3 := \{\zeta_{12},\; \zeta_{23}\},\\
&Z_4 := \{\zeta_{14},\; \zeta_{24},\; \zeta_{34}\},\\
&Z_k := \{\zeta_{1k},\; \zeta_{2k},\; \zeta_{3k},\; \zeta_{4k}\}, \qquad k = 5, \dots, n-1,\\
&Z_n := \{\zeta_{1n},\; \zeta_{2n},\; \zeta_{3n},\; \zeta_{4n},\; \zeta_{5n},\; \zeta_{6n}\}.\\
\end{align*}

Counting the invariants we obtain
\[
|Z_2|+|Z_3|+|Z_4|+\sum_{k=5}^{n-1}|Z_k|+|Z_n|
        = 1+2+3+4(n-5)+6 = 4n-8.
\]
Denote
\(
\mathcal G_n := \bigcup_{k=2}^{n} Z_k.
\)

\begin{theorem}
Let $n \geq 7$. Then the set $\mathcal G_n$ of $4n-8$ invariants is algebraically independent and forms a transcendence basis for the field $\mathcal I_{n,0}$:
\[
\mathcal I_{n,0} = \mathbb{R}\bigl( \mathcal G_n \bigr).
\]

\end{theorem}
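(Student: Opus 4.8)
The argument splits into three parts: that every element of $\mathcal G_n$ is an absolute joint projective invariant, so that $\mathbb R(\mathcal G_n)\subseteq\mathcal I_{n,0}^{G}$; that $\mathcal G_n$ is algebraically independent over $\mathbb R$; and that this inclusion is an equality. Part one is the computation already carried out for $n=3$: each generator equals $\zeta_{ij}=\det(\ell_i,\ell_j,A_i\times A_j)=\pm(\ell_i\cdot A_j)(\ell_j\cdot A_i)$, and by Lemma~\ref{lem:point-line-action} together with the prolongation formulas of Theorem~\ref{thm:first-prol} the scalars $\det G$, $(\det G)^{-1}$ and the point-weights $c_1x_i+c_2y_i+c_3$ produced by the group cancel in pairs, so $\zeta_{ij}$ has weight $0$. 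Since $\operatorname{trdeg}_{\mathbb R}\mathcal I_{n,0}^{G}=4n-8=|\mathcal G_n|$ by Rosenlicht's theorem, once algebraic independence is proved $\mathcal G_n$ is automatically a transcendence basis, and equality with $\mathcal I_{n,0}^{G}$ is equivalent to triviality of the finite extension $\mathcal I_{n,0}^{G}/\mathbb R(\mathcal G_n)$.

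For the algebraic independence I would induct on $n$, the base $n=7$ being a finite symbolic Jacobian-rank check. The key observation is that $\mathcal G_{n+1}$ is the disjoint union of $\mathcal G_n\setminus\{\zeta_{5n},\zeta_{6n}\}$ and the six generators $\zeta_{1,n+1},\dots,\zeta_{6,n+1}$. The former is algebraically independent as a subset of $\mathcal G_n$, and since $\zeta_{j,n+1}=\pm(\ell_j\cdot A_{n+1})(\ell_{n+1}\cdot A_j)$ involves only the new four-tuple $v_{n+1}=(x_{n+1},y_{n+1},p_{n+1},q_{n+1})$ and the fixed tuples $v_1,\dots,v_6$, the inductive step reduces to showing that the six differentials $d\zeta_{1,n+1},\dots,d\zeta_{6,n+1}$ are linearly independent modulo the differentials of $\mathbb R(\mathcal G_n\setminus\{\zeta_{5n},\zeta_{6n}\})$. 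This is a statement about a seven-point configuration, hence independent of $n$, and amounts to the non-vanishing of one fixed $6\times6$ minor whose entries are rational functions of the coordinates of seven points. Equivalently one may order the variable blocks as $v_n,v_{n-1},\dots,v_5,v_1,\dots,v_4$ and note that the Jacobian of $\mathcal G_n$ is almost block-triangular: for each $k$ the $4\times4$ block $\partial(\zeta_{1k},\zeta_{2k},\zeta_{3k},\zeta_{4k})/\partial v_k$ is generically nonsingular (a single two-point computation uniform in $n$), while the remaining invariants $\zeta_{12},\zeta_{13},\zeta_{23},\zeta_{14},\zeta_{24},\zeta_{34},\zeta_{5n},\zeta_{6n}$ must account for the eight effective directions in $v_1,\dots,v_4$.

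For the equality $\mathcal I_{n,0}^{G}=\mathbb R(\mathcal G_n)$ I would use a rational moving frame. Since $PGL(3,\mathbb R)$ acts simply transitively on ordered quadruples of points in general position, there is a unique $g=g(A_1,A_2,A_3,A_4)\in PGL(3,\mathbb R)$, rational in the data, carrying $A_1,\dots,A_4$ to the standard frame $e_1,e_2,e_3,(1{:}1{:}1)$; transporting the configuration by $g$ yields a rational cross-section of the action whose $4n-8$ coordinates — two for each normalized gradient line $\bar\ell_1,\dots,\bar\ell_4$ (constrained through a fixed vertex but carrying a scale) and four for each pair $(\bar A_k,\bar\ell_k)$, $k\ge5$ — generate $\mathcal I_{n,0}^{G}$ by Rosenlicht's theorem and generic freeness. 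Equality then means precisely that the rational map $\Phi$ defined by the $4n-8$ generators of $\mathcal G_n$, restricted to this cross-section, is birational onto its image; birationality is dominance (which is the algebraic independence above) plus generic injectivity. For the latter I would invert the coupled system $\zeta_{ij}=\mathrm{const}$: in normalized coordinates each $\zeta_{jk}$ with $j\le4$ equals $\pm(\bar\ell_j\cdot\bar A_k)\,(\bar\ell_k)_j$, linear in $\bar\ell_j$ and in $\bar A_k$, so for $n\ge7$ each of $\bar\ell_1,\dots,\bar\ell_4$ satisfies an over-determined linear system of such relations; eliminating the line variables leaves a system in the $\bar A_k$ (tied together through $Z_n$ and through $\zeta_{5n},\zeta_{6n}$) which one checks has a unique solution generically. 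With dominance this makes $\Phi$ birational, hence $\mathcal I_{n,0}^{G}=\mathbb R(\mathcal G_n)$.

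The main obstacle is the coupling caused by the oversized block $Z_n$ and the truncated blocks $Z_2,Z_3,Z_4$: the Jacobian of $\mathcal G_n$ is only \emph{almost} block-triangular and the normalized reconstruction system is not triangular, so one must genuinely rule out any accidental rank drop or spurious solution branch involving $\zeta_{5n},\zeta_{6n}$ together with the low-index generators and the blocks $v_5,v_6$. In the inductive formulation this is exactly the non-vanishing of one fixed $6\times6$ rational minor attached to a seven-point configuration — finite but not transparent. A secondary technical point: the genericity, orbit-separation and reconstruction arguments are cleanest over $\mathbb C$, since all quantities are algebraic, with the statement over $\mathbb R$ following by descent.
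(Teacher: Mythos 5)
Your overall strategy for the algebraic-independence part coincides with the paper's: both arguments rest on showing that the Jacobian of $\mathcal G_n$ has generic rank $4n-8$, exploiting the almost block-triangular structure in which each block $Z_k$, $k\le n-1$, contributes a full-rank block in its own variables, while the truncated blocks $Z_2,Z_3,Z_4$ and the oversized block $Z_n$ couple through two leftover coordinates. The paper carries this out by normalizing $A_1,A_2,A_3$, ordering the remaining $20$ variables (for $n=7$) so that $q_2$ and one coordinate of the fourth point are reserved for the last $6\times 6$ block, and then exhibiting an explicit specialization ($p_3=q_3$, $y_4=y_5=y_6=0$, $p_i=\frac{x_4-2}{x_4}q_i$, \dots) under which every diagonal block $D_2,\dots,D_7$ has a computed nonzero determinant and all entries above $D_7$ vanish; the case $n>7$ is handled by prescribing analogous values for each new quadruple. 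Your identification of the generators as $\zeta_{ij}=\pm(\ell_i\cdot A_j)(\ell_j\cdot A_i)$ and the weight-cancellation argument for invariance agree with the paper.

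The genuine gap is in your inductive step. You assert that linear independence of $d\zeta_{1,n+1},\dots,d\zeta_{6,n+1}$ modulo the differentials of $\mathcal G_n\setminus\{\zeta_{5n},\zeta_{6n}\}$ ``is a statement about a seven-point configuration, hence independent of $n$.'' That reduction is not justified: the quotient is taken modulo the span of the differentials of \emph{all} the old generators, which involve all $n$ points, so the required non-degeneracy is a condition on the whole configuration, not only on $v_1,\dots,v_6,v_{n+1}$. Making it uniform in $n$ requires exactly the device the paper supplies: a specialization that simultaneously keeps the old Jacobian of full rank on a prescribed set of columns and annihilates the interaction of the old generators with the two extra columns that the new $6\times 6$ block must borrow (the new block can gain at most $4$ ranks from $v_{n+1}$ alone). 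You correctly flag this coupling as the main obstacle, but the decisive $6\times 6$ minor is left unevaluated, so the core computation of the proof is missing from your plan; the paper's content at this point is precisely the explicit determinants $|D_2|,\dots,|D_7|$. On the field equality $\mathcal I_{n,0}=\mathbb R(\mathcal G_n)$ you genuinely diverge: the paper passes from ``transcendence basis'' to ``generating set'' using only the count $\operatorname{trdeg}\mathcal I_{n,0}=4n-8$, whereas you rightly note that algebraic independence by itself only makes $\mathcal I_{n,0}$ algebraic over $\mathbb R(\mathcal G_n)$ (compare $\mathbb R(x^2)\subset\mathbb R(x)$) and propose a moving-frame cross-section plus generic injectivity to close this; that is a legitimate strengthening of what the paper does, but as written the injectivity check (``one checks it has a unique solution generically'') is also left unverified, so this part too remains a plan rather than a proof.
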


\begin{proof}
We present the proof in detail for the case \(n = 7\); the inductive step to arbitrary \(n\) then follows directly.

The idea of the proof is to explicitly construct a minor of the Jacobian matrix of the invariants $\mathcal{G}_n$ that attains the maximal rank $20$ for a particular choice of variables.

Before computing the Jacobian, we normalize six parameters by fixing the coordinates of the first three points:
\[
x_1=y_1=0,\;
x_2=1,\;y_2=0,\;
x_3=0,\;y_3=1,\;
\]
Note that this normalization does not increase the Jacobian rank, so any maximal rank established after normalization remains valid.
 The pair of gradient variables \(p_1, q_1\) are treated as parameters and are excluded from differentiation.

The residual variables, i.e., the 20 coordinates that remain after the initial normalization, are those with respect to which we compute the Jacobian. These variables are grouped block-wise as follows:
\[
(p_2)\;;\;
(p_3,q_3)\;;\;
(y_4,p_4,q_4)\;;\;
(x_5,y_5,p_5,q_5)\;;\;
(x_6,y_6,p_6,q_6)\;;\;
(x_7,y_7,p_7,q_7,q_2,y_4).
\]

Denote by \(J\) the Jacobian of the \(20\) invariants
\[
\mathcal G_7=
\bigl\{
\zeta_{12}\;;\;
\zeta_{13},\zeta_{23}\;;\;
\zeta_{14},\zeta_{24},\zeta_{34}\;;\;
\zeta_{15},\zeta_{25},\zeta_{35},\zeta_{45}\;;\;
\zeta_{16},\zeta_{26},\zeta_{36},\zeta_{46}\;;\;
\zeta_{17},\zeta_{27},\zeta_{37},\zeta_{47},\zeta_{57},\zeta_{67}
\bigr\}.
\]
With the column order  the $20\times 20$ matrix \(J\) is   almost
block lower–triangular:

\[
\arraycolsep4pt
J=
\begin{blockarray}{c@{\;}c@{\;}c@{\;}c@{\;}c@{\;}c}
\begin{block}{(c@{\;}c@{\;}c@{\;}c@{\;}c@{\;}c)}
\boxed{D_2} & 0 & 0 & 0 & 0 & * \\                 
* & \boxed{D_3} & 0 & 0 & 0 & * \\                
* & * & \boxed{D_4} & 0 & 0 & * \\                
* & * & * & \boxed{D_5} & 0 & * \\                
* & * & * & * & \boxed{D_6} & * \\                
* & * & * & * & * & \boxed{D_7} \\                
\end{block}
\end{blockarray}
\]

\smallskip\noindent
Here the diagonal blocks have the following sizes:
\[
\dim D_2=1\times 2,\;
\dim D_3=2\times 2,\;
\dim D_4=3\times 4,\;
\dim D_5=\dim D_6=4\times4,\;
\dim D_7=6\times 6.
\]
The nonzero entries above \(D_7\) are due to the placement of \(q_2\) and \(y_4\) at the end of the ordering; these variables can appear in derivatives of invariants \(\zeta_{i,j}\) with indices 2 and 4.

To prove that the Jacobian \(J\) has full rank 20, we adopt the following strategy. Since \(J\) depends on 20 variables, it suffices to exhibit a specific choice of values (a specialization) for which the rank of \(J\) is exactly 20. We will construct such a specialization in which all diagonal blocks \(D_i\), for \(i < 7\), are nondegenerate (i.e., have nonzero determinants), and all entries above the block \(D_7\) vanish. In this case, the Jacobian matrix takes a block-diagonal form with full-rank blocks on the diagonal, which guarantees that the rank will be 20 if such a specialization exists.

For each block of residual variables, we consider a minor of the Jacobian matrix \(J\), formed by selecting the columns corresponding to the variables in that block, along with two additional columns associated with the variables \(q_2\) and \(q_4\).

For the block \(D_2\), after the initial normalization we have:
\[
\zeta_{1,2} = p_1 p_2.
\]
Therefore, the Jacobian submatrix \(J\) corresponding to the columns \(p_2, q_2, q_4\) and the row \(\zeta_{1,2}\) takes the form \([p_1,\; 0,\; 0]\), which implies that \(|D_2| = p_1\), and the two remaining entries in the row are zero.

For the block \(D_3\), we consider the \(2 \times 4\) minor defined by the columns \(p_3, q_3, q_2, y_4\) and the rows \(\zeta_{1,3}, \zeta_{2,3}\), which, after normalization, take the form:
\begin{gather*}
\zeta_{1,3} = q_1 q_3,\\
\zeta_{2,3} = (p_3 - q_3)(p_2 - q_2).
\end{gather*}

Direct computation shows that the corresponding submatrix of the Jacobian has the form:
\[
\left(
\begin{array}{cccc}
0 & 1 & 0 & 0 \\
p_2 - q_2 &  q_2-p_2  &  q_3-p_3  & 0
\end{array}
\right).
\]

Under the specialization \(p_3 = q_3\), the last two columns become zero, and the determinant of the \(2 \times 2\) submatrix formed by the first two columns becomes:
\[
|D_3| =
\begin{vmatrix}
0 & 1 \\
p_2 - q_2 & q_2-p_2 
\end{vmatrix}
= p_2 - q_2.
\]

For the block \(D_4\), we consider the \(3 \times 5\) minor defined by the columns \(x_4, y_4, p_4, q_2, y_4\) and the normalized rows \(\zeta_{1,4}, \zeta_{2,4}, \zeta_{3,4}\), which take the form:
\begin{gather*}
\zeta_{{1,4}}= \left( -p_{{4}}x_{{4}}-q_{{4}}y_{{4}} \right)  \left( -p_{{1}}x_{{4}}-q_{{1}}y_{{4}} \right),\\
\zeta_{{2,4}}= \left( -p_{{4}}
x_{{4}}-q_{{4}}y_{{4}}+p_{{4}} \right)  \left( -p_{{2}}x_{{4}}-q_{{2}}
y_{{4}}+p_{{2}} \right),\\
\zeta_{{3,4}}= \left( -p_{{4}}x_{{4}}-q_{{4}}
y_{{4}}+q_{{4}} \right)  \left( -p_{{3}}x_{{4}}-q_{{3}}y_{{4}}+q_{{3}}
 \right). 
\end{gather*}
By direct computation, one can verify that under the specialization 
$$
 p_{{1}}={\frac { \left( x_{{4}}-2 \right) q_{{1}}}{x_{{4}}}},p_{{2}}={\frac { \left( x_{{4}}-2 \right) q_{{2}}}{x_{{4}}}},p_{{3}}=q
_{{3}},p_{{4}}=-{\frac {q_{{4}} \left( x_{{4}}-2 \right) }{x_{{4}}}},y_{{4}}=0
$$
the corresponding minor of the Jacobian matrix takes the form:
$$
\left( \begin {array}{ccccc} x_{{4}} \left( x_{{4}}-2 \right) q_{{1}}&0&-2\,{\frac {q_{{4}} \left( x_{{4}}-2 \right) ^{2}q_{{1}}}{x_{{4}}}}
&0&0\\ \noalign{\medskip}{\frac { \left( x_{{4}}-1 \right) ^{2}
 \left( x_{{4}}-2 \right) q_{{2}}}{x_{{4}}}}&0&-2\,{\frac {q_{{4}}
 \left( x_{{4}}-2 \right) ^{2}q_{{2}} \left( x_{{4}}-1 \right) }{{x_{{
4}}}^{2}}}&0&0\\ \noalign{\medskip}x_{{4}}q_{{3}} \left( x_{{4}}-1
 \right) &-q_{{3}} \left( x_{{4}}-1 \right) &-2\,{\frac {q_{{4}}
 \left( x_{{4}}-1 \right) ^{2}q_{{3}}}{x_{{4}}}}&0&0\end {array}\right).
$$
Then 
$$
|D_4|= \left|\begin {array}{ccc} x_{{4}} \left( x_{{4}}-2 \right) q_{{1}}&0&{\frac {q_{{4}} \left( x_{{4}}-2 \right) ^{2}q_{{1}}}{x_{{4}}}}
\\ \noalign{\medskip}{\frac { \left( x_{{4}}-1 \right) ^{2} \left( x_{
{4}}-2 \right) q_{{2}}}{x_{{4}}}}&0&{\frac {q_{{4}} \left( x_{{4}}
-2 \right) ^{2}q_{{2}} \left( x_{{4}}-1 \right) }{{x_{{4}}}^{2}}}
\\ \noalign{\medskip}x_{{4}}q_{{3}} \left( x_{{4}}-1 \right) &-q_{{3}}
 \left( x_{{4}}-1 \right) &{\frac {q_{{4}} \left( x_{{4}}-1
 \right) ^{2}q_{{3}}}{x_{{4}}}}\end {array} \right|={\frac { \left( x_{{4}}-2 \right) ^{3}q_{{1}}q_{{4}}q_{{2}}
 \left( x_{{4}}-1 \right) ^{2}q_{{3}}}{{x_{{4}}}^{2}}}
.
$$
For the block \(D_5\), we apply the following specialization:
$$
p_{{5}}={\frac {q_{{5}} \left( x_{{4}}-2 \right) }{x_{{4}}}},y_{{5}
}=0 
$$
We omit the explicit form of the minor and the intermediate steps of the determinant computation, and present only the final result. Combined with the previous specializations, this eliminates the two nonzero columns at the end of the block, yielding:
\[
|D_5| = -4\,\frac{q_4 q_5^2 (x_4 - 2)^4 (x_4 - x_5)\, q_1 x_5 (x_5 - 1)^2 q_2 q_3}{x_4^4}.
\]

The structure of block \(D_6\) is analogous to that of \(D_5\). Therefore, the additional specialization
$$
p_{{6}}={\frac {q_{{6}} \left( x_{{4}}-2 \right) }{x_{{4}}}},y_{{6}}=0 
$$
also eliminates the two nonzero columns at the end of the block. As a result, we obtain:
$$
|D_6|=-4\,{\frac {q_{{4}}{q_{{6}}}^{2} \left( x_{{4}}-2 \right) ^{4} \left( 
x_{{4}}-x_{{6}} \right) q_{{1}}x_{{6}} \left( x_{{6}}-1 \right) ^{2}q_
{{2}}q_{{3}}}{{x_{{4}}}^{4}}}.
$$

Finally, we turn to block \(D_7\). The previous specializations have eliminated all nonzero entries above this block in the last six columns. Taking into account all prior specializations, a direct computation yields:
\begin{gather*}
|D_7|={{x_{
{4}}}^{-4}} q_{{1}}q_{{3}}q_{{4}}q_{{5}}q_{{6}}x_{{5}}x_{{6}}{y_{{7}}}^{2}
 \left( x_{{4}}-2 \right)  \left( p_{{7}}x_{{7}}+q_{{7}}y_{{7}}-p_{{7}
} \right)  \left( p_{{7}}x_{{4}}-q_{{7}}x_{{4}}+2\,q_{{7}} \right) \times \\ \times 
 \left( p_{{7}}x_{{4}}x_{{7}}+p_{{7}}y_{{7}}x_{{4}}+2\,p_{{7}}x_{{7}}y
_{{7}}-q_{{7}}x_{{7}}x_{{4}}-q_{{7}}y_{{7}}x_{{4}}+2\,q_{{7}}{y_{{7}}}
^{2}-p_{{7}}x_{{4}}+q_{{7}}x_{{4}}+2\,q_{{7}}x_{{7}}-2\,q_{{7}}
 \right) \times \\ \times \left( p_{{7}}{x_{{4}}}^{2}-p_{{7}}x_{{4}}x_{{7}}-q_{{7}}{x_
{{4}}}^{2}+q_{{7}}x_{{7}}x_{{4}}-2\,q_{{7}}y_{{7}}x_{{4}}+2\,q_{{7}}x_
{{4}}-2\,q_{{7}}x_{{7}} \right)  \left( x_{{5}}-x_{{6}} \right). 
\end{gather*}

Having obtained explicit expressions for all determinants \(D_i\), it is now straightforward to choose values of the variables such that all these determinants are nonzero. For instance, setting
\[
x_4 = 4,\quad x_5 = 5,\quad x_6 = 6,\quad x_7 = 7,\quad p_7 = 0,\quad q_7 = 1,
\]
and assigning arbitrary nonzero values to all other involved variables (with the additional condition \(p_2 \neq q_2\)), we ensure that each determinant \(D_i\) is nonvanishing.

Thus, we have demonstrated the existence of a specialization of the variables that allows us to extract a \(20 \times 20\) square minor of the Jacobian matrix \(J\), which has a block-diagonal structure with nonzero diagonal determinants. Hence, the Jacobian \(J\) attains the maximal rank of 20.

The proof for \(n > 7\) proceeds in the same manner. For each new block of variables \((x_n, y_n, p_n, q_n)\), we assign the values \(x_n = n\), \(y_n = 1\), \(p_n = 0\), and \(q_n = 1\), while the parameters from previous steps are set as before -- in particular, \(x_{n-1} = n-1\), \(y_n = 0\), and \(p_{n-1} = \frac{1}{2} q_{n-1}\). With this construction, the Jacobian matrix has rank \(4n - 8\) by design.

Hence, we conclude that \(\mathcal{G}_n\) indeed forms a minimal transcendence basis for the field of invariants.
\end{proof}

Thus, we have obtained a complete and explicit description of all algebraically independent first-order absolute joint projective invariants for any number of points \( n \geq 3 \), providing a minimal generating set for the corresponding field of invariants.

\subsection{What happens in the case \boldmath{$n=2$}?}

At first glance, one might expect that no absolute first-order invariants exist for $n=2$. Indeed, the dimension count gives:
\[
\dim J^{(2)}_1 = 4 \cdot 2 = 8, \qquad \dim PGL(3,\mathbb{R}) = 8,
\]
and so the general formula for the transcendence degree
yields zero in this case.

However, the expression
$
\zeta_{12}
$
provides a counterexample, as it is straightforward to verify that $\zeta_{12}$ is invariant under the action of $PGL(3,\mathbb{R})$.

The apparent contradiction is resolved by recognizing that the formula $4n - 8$ assumes a \emph{generic} configuration with a trivial stabilizer under the action of $PGL(3,\mathbb{R})$. However, for $n=2$ this assumption fails.
Indeed, the two points $A_1,A_2$ determine the line $L:=A_1A_2$, and the two gradient lines $\ell_1,\ell_2$ meet at a third point $P=\ell_1\cap\ell_2$.
Any projective transformation that fixes the three points $A_1,A_2,P$ acts as a one–parameter homology with axis $L$ and centre $P$; it preserves both points and their associated gradient directions.
Consequently
\[
\dim\operatorname{Stab}_{PGL(3)}(J^{(2)}_1)=1,
\qquad
\operatorname{trdeg}_{\mathbb R}\mathcal I_{2,0}=8-(8-1)=1,
\]
so one absolute first–order invariant does exist.

\section{Field of Relative Invariants}

In this section, we investigate the structure of the field $\mathcal{I}_n$ of relative first-order joint projective invariants under the diagonal action of the projective group $PGL(3, \mathbb{R})$. Our goal is to describe $\mathcal{I}_n$ explicitly as an algebraic extension of $\mathcal{I}_{n,0}$. We will show that this extension is always simple: there exists a single primitive relative invariant $z$ such that every other relative invariant is a rational function in $z$ with coefficients in $\mathcal{I}_{n,0}$. The key to this result lies in the analysis of the weight group of relative invariants and the minimality of the weight exponent.

The intuition behind the simplicity of the extension is as follows: suppose $z$ is a relative invariant with minimal nonzero weight $\operatorname{wt}(z)$ (in absolute value). Then for any other relative invariant $z'$, the ratio
\[
\frac{z'}{z^{\frac{\operatorname{wt}(z')}{\operatorname{wt}(z)}}}
\]
is an absolute invariant, and thus lies in $\mathcal{I}_{n,0}$. Therefore, $z' \in \mathcal{I}_{n,0}(z)$, and the extension is simple. However, to turn this argument into a rigorous proof within the field of rational functions, we must ensure that the quantity $\frac{\operatorname{wt}(z')}{\operatorname{wt}(z)}$ is an integer for all $z'$. This requires, for each $n$, the identification of a primitive generator $z_n$ whose weight divides the weight of every other relative invariant. 

\subsection{Construction of Relative Invariants}

For each \( n > 1 \), we seek relative invariants — candidates for primitive generators of the extension — in the form of products of determinants
\[
\Delta_{ijk}=\det(\ell_1, \ell_2, \ell_3),
\]
which transform under the action of the projective group as follows:
\[
\Delta_{ijk} \;\longmapsto\; \frac{(c_1x_i + c_2y_i + c_3)
(c_1x_j + c_2y_j + c_3)(c_1x_k + c_2y_k + c_3)}{D}
\cdot \Delta_{ijk}.
\]
Recall that the total Jacobian of the group action \( T \) is given by
\[
J(T) = \prod_{i=1}^n J_i(T) = \frac{D^n}{\displaystyle\prod_{i=1}^n
(c_1x_i + c_2y_i + c_3)^3}.
\]

Let \(\mathcal{S} := \bigl\{ S \subset \{1, \dots, n\} \mid |S| = 3 \bigr\}\) denote the set of all 3-element subsets of indices. For each \( S \in \mathcal{S} \), let \( m_S \in \mathbb{Z} \) be an integer exponent, and define the product
\[
\Delta_n := \prod_{S \in \mathcal{S}} \Delta_S^{m_S}.
\]

Under a projective transformation \(T \in PGL(3,\mathbb{R})\), each determinant \(\Delta_S\) transforms as
\[
\Delta_S \mapsto D^{-1} \cdot \prod_{s \in S} (c_1 x_s + c_2 y_s + c_3) \cdot \Delta_S,
\]
so the entire product transforms according to
\[
\Delta_n \mapsto \left( \prod_{i=1}^n (c_1 x_i + c_2 y_i + c_3)^{e_i} \right)
\cdot D^{-\sum_S m_S} \cdot \Delta_n,
\]
where
\[
e_i := \sum_{\substack{S \in \mathcal{S} \\ i \in S}} m_S.
\]

In order for \(\Delta_n\) to be a relative invariant of weight \(\omega\), the exponents \(m_S\) must satisfy the following system of linear equations:
\begin{equation}\label{eq:weights}
\begin{cases}
\displaystyle\sum_{\substack{S \in \mathcal{S} \\ i \in S}} m_S = -3\omega, & \text{for } i = 1, \dots, n, \\
\displaystyle\sum_{S \in \mathcal{S}} m_S = -n\omega. &
\end{cases}
\end{equation}

To proceed, we perform direct computations and determine the explicit form of \(\Delta_n\) for small values of \(n\).

\subsection{ Explicit Form of \(\Delta_n\) for \(n \leq 6\)}

For \(n = 3\), there is a single triple: \(S = \{1, 2, 3\}\). In this case,
\[
\Delta = \Delta_{123}^m, \qquad
\omega = -\frac{m}{3}, \quad m \in \mathbb{Z},
\]
so the minimal absolute value of the weight is  \(|\omega| = \tfrac{1}{3}\) when \(m = 1\). The associated relative invariant is given by
\[
z_3 := \Delta_{123}, \qquad \operatorname{wt}(z_3) = -\tfrac{1}{3}.
\]

For \(n = 4\), there are four possible index triples:
\[
\mathcal{S} = \{\{1,2,3\},\, \{1,2,4\},\, \{1,3,4\},\, \{2,3,4\}\},
\]
and the corresponding system of equations for the exponents \(m_S\) takes the form:
$$
\begin{cases}
m_{{1}}+m_{{2}}+m_{{3}}=-3\,\omega, \\
m_{{1}}+m_{{2}}+m_{{4}}=-3\,\omega, \\
m_{{1}}+m_{{3}}+m_{{4}}=-3\,\omega, \\
m_{{2}}+m_{{3}}+m_{{4}}=-3\,\omega,\\
m_{{1}}+m_{{2}}+m_{{3}}+m_{{4}}=-4\,\omega. 
\end{cases}
$$
The  solution to this system is
$$
\left\{ m_{{1}}=-\omega,m_{{2}}=-\omega,m_{{3}}=-\omega,m_{{4}}=-\omega \right\} 
$$
Hence, the minimal absolute weight is \(1\), and the corresponding relative invariant is given by
$$
z_4=\Delta_{{1,2,3}}\Delta_{{1,2,4}}\Delta_{{1,3,4}}\Delta_{{2,3,4}}.
$$

For  $n=5$,    we obtain the following system with 10 unknowns:
$$
\begin{cases}
 m_{{1}}+m_{{2}}+m_{{3}}+m_{{4}}+m_{{5}}+m_{{6}}=-3\,\omega,\\
m_{{1}}+m_{{2}}+m_{{3}}+m_{{7}}+m_{{8}}+m_{{9}}=-3\,\omega, \\
m_{{1}}+m_{{4
}}+m_{{5}}+m_{{7}}+m_{{8}}+m_{{10}}=-3\,\omega,\\ m_{{2}}+m_{{4}}+m_{{6}}
+m_{{7}}+m_{{9}}+m_{{10}}=-3\,\omega,\\ m_{{3}}+m_{{5}}+m_{{6}}+m_{{8}}+m
_{{9}}+m_{{10}}=-3\,\omega,\\m_{{1}}+m_{{2}}+m_{{3}}+m_{{4}}+m_{{5}}+m_{
{6}}+m_{{7}}+m_{{8}}+m_{{9}}+m_{{10}}=-5\,\omega 
\end{cases}
$$
Let us solve it and obtain the following:
$$
\begin{cases}
 m_{{1}}=m_{{10}}+\omega+m_{{9}}+m_{{6}},\\
m_{{2}}=m_{{5}}+m_{{8
}}+m_{{10}}+\omega, \\
m_{{3}}=-m_{{9}}-m_{{5}}-m_{{6}}-m_{{8}}-m_{{10}}-3
\,\omega,\\
m_{{4}}=-m_{{5}}-m_{{6}}-2\,\omega-m_{{10}},\\
m_{{7}}=-2\,\omega-m_{{8}}-m_{{9}}-m_{{10}}
\end{cases}
$$
By setting all free variables to zero, we obtain the following particular solution:
$$
\begin{cases}
 m_{{1}}=\omega,\\
m_{{2}}=\omega, \\
m_{{3}}=-3
\,\omega,\\
m_{{4}}=-2\,\omega,\\
m_{{7}}=-2\,\omega
\end{cases}
$$
The smallest value of $\omega$  (in absolute value) that yields integer solutions is $-1$, which gives the following relative invariant:
$$
z_5={\frac {{\Delta_{{1,2,5}}}^{3}{\Delta_{{1,3,4}}}^{2}{\Delta_{{2,3,4}}}
^{2}}{\Delta_{{1,2,3}}\Delta_{{1,2,4}}}},\operatorname{wt}(z_5) = -1.
$$
Note that assigning nonzero values to the free variables would only multiply $z_5$  by some absolute invariant, and cannot decrease its weight.

For $n=6$, we omit the calculations and present only the final form of the relative invariant.
\[
z_6 := \frac{ \Delta_{125} \Delta_{126} \Delta_{134} \Delta_{234} }
             { \Delta_{123} \Delta_{124} }, \qquad \operatorname{wt}(z_6) = -\tfrac13.
\]

\subsection{The case $n>6$} The examples above naturally lead us to the following generalization.
\begin{theorem}
\label{thm:zn-general}
Let $n\ge5$ and let  $g=\gcd(n,3)$.  
Define
$$
z_n\;=\begin{cases}
\frac{\displaystyle
      \Bigl(\prod_{i=5}^{\,n} \Delta_{12i}\Bigr)^{3}\;
      \bigl(\Delta_{134}\,\Delta_{234}\bigr)^{\,n-3}}
     {\displaystyle
      \bigl(\Delta_{123}\,\Delta_{124}\bigr)^{\,2n-9}}, \text{ if  } g=1,\\
			\frac{\displaystyle
      \Bigl(\prod_{i=5}^{\,n} \Delta_{12i}\Bigr)\;
      \bigl(\Delta_{134}\,\Delta_{234}\bigr)^{\,\frac{n}{3}-1}}
     {\displaystyle
      \bigl(\Delta_{123}\,\Delta_{124}\bigr)^{\,\frac{2n}{3}-3}},   \text{  if  } g=3\\
			\end{cases}
$$
Then \( z_n \) is a relative projective invariant, and its weight is equal
$$
\operatorname{wt}(z_n)=-\frac{1}{g}.
$$
\end{theorem}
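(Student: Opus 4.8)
The plan is to reduce the statement to the combinatorial weight system~\eqref{eq:weights}. By the transformation rule for a single determinant $\Delta_S$ and the product formula recorded just above the theorem, an expression $\prod_{S\in\mathcal S}\Delta_S^{m_S}$ is a relative invariant of weight $\omega$ exactly when the incidence multiplicities $e_i:=\sum_{S\ni i}m_S$ all equal $-3\omega$; the second equation $\sum_S m_S=-n\omega$ of~\eqref{eq:weights} is then automatic, since summing the $n$ pointwise equations gives $\sum_i e_i=\sum_S|S|\,m_S=3\sum_S m_S$, which forces $\sum_S m_S=-n\omega$. So it suffices to read the exponents $m_S$ off the closed form of $z_n$, compute each $e_i$, and check that $e_i$ is one and the same constant, namely $3/g$; this simultaneously gives invariance and identifies $\operatorname{wt}(z_n)=-1/g$. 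One first notes in passing that $z_n$ is a well-defined element of $J_1^{(n)}$: for $n\ge5$ the exponent $2n-9$, and for $3\mid n$ the exponent $\tfrac{2n}{3}-3$, are positive integers, and $\Delta_{123}\Delta_{124}$ is a nonzero polynomial in the $\ell_i$, hence invertible in the field of rational functions. Since $J(T)$ is a non-constant function, the weight of a non-absolute relative invariant is unique.

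I would then record that, in both cases $g=1$ and $g=3$, the only triples carrying a nonzero exponent are
\[
\{1,2,i\}\ (5\le i\le n),\qquad \{1,3,4\},\qquad \{2,3,4\},\qquad \{1,2,3\},\qquad \{1,2,4\},
\]
with exponents $\bigl(3,\ n-3,\ n-3,\ -(2n-9),\ -(2n-9)\bigr)$ when $g=1$, and the same list divided by $3$, i.e. $\bigl(1,\ \tfrac n3-1,\ \tfrac n3-1,\ -(\tfrac{2n}3-3),\ -(\tfrac{2n}3-3)\bigr)$, when $g=3$. The set of triples together with its exponents is invariant under the simultaneous swap $1\leftrightarrow2,\ 3\leftrightarrow4$, so $e_1=e_2$ and $e_3=e_4$, and trivially $e_5=\dots=e_n$; hence only the three numbers $e_1$, $e_3$, $e_j$ (for any $j\ge5$) need to be evaluated. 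Grouping the indices by their incidence pattern with the five types of triples, each of these is a short count in which the terms linear in $n$ cancel; for instance in the case $g=1$ one gets $e_1=3(n-4)+(n-3)-2(2n-9)=3$, $e_3=2(n-3)-(2n-9)=3$, and $e_j=3$, so $e_i\equiv 3=-3\omega$ with $\omega=-1=-\tfrac1g$. The case $g=3$ is the identical arithmetic scaled by $\tfrac13$, giving $e_i\equiv 1=-3\omega$ with $\omega=-\tfrac13=-\tfrac1g$.

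No genuine obstacle arises: the argument is a bookkeeping verification, and the only places needing care are keeping the two cases $g=1$ and $g=3$ rigorously separate and partitioning $\{1,\dots,n\}$ correctly according to incidence with the five kinds of triples. Because the theorem claims only that $z_n$ is a relative invariant of weight $-1/g$ — minimality of that weight being addressed separately — the proof is complete once the identity $e_i=3/g$ has been checked for the three index types above.
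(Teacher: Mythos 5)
Your proposal is correct and takes essentially the same approach as the paper: both verify invariance by bookkeeping the exponents produced by the transformation rule of $\Delta_{ijk}$, the paper by counting the power of $D$ and asserting that each factor $(c_1x_i+c_2y_i+c_3)$ appears with total exponent $3/g$, you by checking $e_i=3/g$ in the system~\eqref{eq:weights} (your symmetry reduction and the observation that the $D$-equation follows automatically are just a cleaner organization of the same count). No gaps.
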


	\begin{proof}
We will show that \( z_n \) is a relative invariant and compute its weight.  
According to the transformation formula for the determinant \( \Delta_{ijk} \), we have:
\[
\Delta_{ijk} \mapsto
\frac{(c_1x_i + c_2y_i + c_3)(c_1x_j + c_2y_j + c_3)(c_1x_k + c_2y_k + c_3)}{D}
\cdot \Delta_{ijk}.
\]

Each determinant in the numerator or denominator of \( z_n \) contributes the corresponding multiplicative factor to the transformation \( T(z_n) \). Let us count the total power of \( D \) in this rational function.

Assume first that \( g = 1 \). In the numerator, we have \( n - 4 \) determinants of the form \( \Delta_{12i} \), each raised to the third power, and two determinants \( \Delta_{134} \) and \( \Delta_{234} \), each raised to the power \( n - 3 \). In the denominator, we have two determinants raised to the power \( 2n - 9 \). Since each determinant contributes a factor of \( 1/D \) to \( T(z_n) \), we get:
\[
\operatorname{deg}_D(T(z_n)) = -3(n - 4) - 2(n - 3) + 2(2n - 9) = -n.
\]

Similarly, each factor \( (c_1x_i + c_2y_i + c_3) \) appears in total with exponent 3 for every \( i \), hence:
\[
T(z_n) = J(T)^{-1} \cdot z_n,
\]
and thus \( z_n \) is a relative invariant of weight \( -1 = -\tfrac{1}{g} \).

Now consider the case \( g = 3 \). A similar calculation yields:
\[
\operatorname{deg}_D(T(z_n)) = -(n - 4) - 2\left(\tfrac{n}{3} - 1\right) + 2\left(\tfrac{2n}{3} - 3\right) = -\tfrac{n}{3},
\]
and each factor \( (c_1x_i + c_2y_i + c_3) \) appears with exponent 1, so that
\[
T(z_n) = J(T)^{-1/3} \cdot z_n,
\]
which means \( z_n \) is a relative invariant of weight \( -\tfrac{1}{3} \).

Hence, in general, we have \( \operatorname{wt}(z_n) = -\tfrac{1}{g} \),
as claimed.
\end{proof}

We are now in a position to prove that \( \mathcal{I}_n \) is a simple extension of \( \mathcal{I}_{n,0} \), and that \( z_n \) is a primitive element of this extension.  
To do so, it suffices to show that \( \tfrac{1}{g} \) is the smallest possible weight (in absolute value) that can occur.

\begin{theorem}
Let
\[
W_n := \left\{ \omega \in \mathbb{Q} \,\middle|\, \exists\, F \in \mathcal{I}_n \text{ with } \operatorname{wt}(F) = \omega \right\}
\]
be the additive group of all possible weights of first-order relative projective invariants for \( n \) points, and let \( g = \gcd(n, 3) \). Then
\[
W_n = \tfrac{1}{g} \mathbb{Z},
\]
and hence \( \mathcal{I}_n = \mathcal{I}_{n,0}(z_n) \).
\end{theorem}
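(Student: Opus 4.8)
The plan is to pin down the weight group $W_n$ completely by sandwiching it between $\tfrac1g\mathbb{Z}$ and itself, and then to read off the field statement as a formal consequence. First observe that $W_n$ is a subgroup of $(\mathbb{Q},+)$: the constant $1$ has weight $0$, a product of relative invariants of weights $\omega,\omega'$ has weight $\omega+\omega'$, and a reciprocal of one of weight $\omega$ has weight $-\omega$. The inclusion $\tfrac1g\mathbb{Z}\subseteq W_n$ is then immediate from the preceding construction: Theorem~\ref{thm:zn-general}, together with the explicit low cases $z_3,z_4$, produces a relative invariant $z_n$ of weight $-\tfrac1g$, and the subgroup it generates is exactly $\tfrac1g\mathbb{Z}$. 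Hence the whole content of the theorem is the opposite inclusion $W_n\subseteq\tfrac1g\mathbb{Z}$, i.e. an \emph{a priori} lower bound on the modulus of any nonzero weight.

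I would obtain this bound by restricting the defining identity of a relative invariant $F$ of weight $\omega$ to two carefully chosen one-parameter subgroups of $PGL(3,\mathbb{R})$, exploiting the fact that along such a subgroup the prolonged action is rational in the group parameter, so that $J(T)^{\omega}$ --- being the ratio of the transformed invariant to the original --- must be a rational function of that parameter, while it is simultaneously an explicit power of a fixed polynomial. The first subgroup is the elation $T_c\colon(x_i,y_i)\mapsto\bigl(x_i/(cx_i+1),\,y_i/(cx_i+1)\bigr)$, for which $D=1$ and $J(T_c)=\prod_{i=1}^{n}(cx_i+1)^{-3}$. For a generic base configuration, the quantity $\bigl(\prod_i(cx_i+1)\bigr)^{-3\omega}$ is then a rational function of $c$; since that polynomial has $n$ distinct simple roots, comparing the residues of the logarithmic derivative forces $-3\omega\in\mathbb{Z}$, that is, $W_n\subseteq\tfrac13\mathbb{Z}$.

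The second subgroup is the dilation $T_t\colon(x_i,y_i)\mapsto(tx_i,ty_i)$; here $D=t^2$, and the first prolongation formula of Theorem~\ref{thm:first-prol} gives $(p_i,q_i)\mapsto(p_i/t,q_i/t)$, while $J(T_t)=t^{2n}$. The invariance identity becomes $F(tx_i,ty_i,p_i/t,q_i/t)=t^{2n\omega}\,F(x_i,y_i,p_i,q_i)$, whose left-hand side is a nonzero rational function of $t$; hence $t^{2n\omega}$ is rational in $t$ and $2n\omega\in\mathbb{Z}$, i.e. $W_n\subseteq\tfrac1{2n}\mathbb{Z}$. Intersecting, $W_n\subseteq\tfrac13\mathbb{Z}\cap\tfrac1{2n}\mathbb{Z}=\tfrac{1}{\gcd(3,2n)}\mathbb{Z}$, and since $\gcd(3,2)=1$ we have $\gcd(3,2n)=\gcd(3,n)=g$, so $W_n=\tfrac1g\mathbb{Z}$. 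The field equality now follows formally: $\mathcal{I}_{n,0}(z_n)\subseteq\mathcal{I}_n$ because $z_n$ and all of $\mathcal{I}_{n,0}$ are relative invariants, and conversely any relative invariant $F$ has weight $k/g$ with $k\in\mathbb{Z}$, whence $F\,z_n^{k}$ has weight $0$, lies in $\mathcal{I}_{n,0}$, and $F=(F\,z_n^{k})\,z_n^{-k}\in\mathcal{I}_{n,0}(z_n)$; therefore $\mathcal{I}_n=\mathcal{I}_{n,0}(z_n)$.

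The argument is short, so I do not expect a serious obstacle; the care required is essentially bookkeeping --- confirming via Theorem~\ref{thm:first-prol} the prolonged action of the two subgroups on the gradient variables $(p_i,q_i)$, and checking that the restriction of $F$ to each orbit is a \emph{nonzero} rational function of the parameter (so that the residue/degree comparison is legitimate), which holds because $F$ is a nonzero element of the rational function field $J_1^{(n)}$ and these orbits are not contained in its zero or pole locus. The one genuinely substantive choice is which subgroups to use: the elation forces the denominator of $\omega$ to divide $3$, the dilation injects the arithmetic of $n$ and eliminates the value $\pm\tfrac13$ precisely when $3\nmid n$, and only the two together --- through $\gcd(3,2n)=\gcd(3,n)$ --- reproduce exactly the modulus $g$.
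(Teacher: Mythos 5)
Your proposal is correct and takes essentially the same route as the paper: the paper likewise restricts the weight identity to the elation subgroup $(x_i,y_i)\mapsto\bigl(x_i/(1-tx_i),\,y_i/(1-tx_i)\bigr)$ to force $-3\omega\in\mathbb{Z}$, then to a one-parameter scaling to get an $n$-dependent integrality constraint, and finally combines Theorem~\ref{thm:zn-general} with the weight-zero ratio $F\,z_n^{k}$ to conclude $\mathcal{I}_n=\mathcal{I}_{n,0}(z_n)$. The only deviations are cosmetic: you use the uniform dilation (giving $2n\omega\in\mathbb{Z}$) where the paper scales $x$ alone (giving $n\omega\in\mathbb{Z}$), which changes nothing since $\gcd(3,2n)=\gcd(3,n)$, and your residue/logarithmic-derivative justification of integrality is, if anything, more careful than the paper's monomial-counting argument.
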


\begin{proof}

Consider the one-parameter subgroup of \( PGL(3, \mathbb{R}) \) acting by
\[
(x_i, y_i) \mapsto \left( \frac{x_i}{1 - tx_i}, \frac{y_i}{1 - tx_i} \right).
\]
The Jacobian of this transformation is
\[
(1 - tx_i)^{-3}.
\]

Since \(F\in\mathcal{I}_n\) is, in general, a rational function in the variables
\(x_i,y_i,p_i,q_i\) $i=1,\ldots, n$, write \(F=P/Q\) with coprime polynomials \(P,Q\).
Fix an index \(i\) and consider the substitution induced by the prolonged action 
\[
(x_i, y_i, p_i, q_i) \mapsto \left(
\frac{x_i}{1 - tx_i},\;
\frac{y_i}{1 - tx_i},\;
-(1 - tx_i)\left( t p_i x_i + t q_i y_i - p_i \right),\;
(1 - tx_i) q_i
\right),
\]
from Theorem~\ref{thm:first-prol}.
By factoring out the maximal power of \((1-tx_i)\) common to all terms, there exist integers
\(m_P(i),m_Q(i)\in\mathbb{Z}\) and polynomials \(\widehat P_i,\widehat Q_i\) not divisible by \((1-tx_i)\) such that
\[
P \mapsto(1-tx_i)^{m_P(i)}\widehat P_i,\qquad
Q\mapsto(1-tx_i)^{m_Q(i)}\widehat Q_i.
\]
Hence
\[
F \mapsto(1-tx_i)^{m_P(i)-m_Q(i)}\cdot \widehat F_i,\qquad
\widehat F_i:=\widehat P_i/\widehat Q_i,
\]
and therefore the exponent \(m_i:=m_P(i)-m_Q(i)\) is an integer.
On the other hand, by the definition of weight and the Jacobian factor \((1-tx_i)^{-3}\),
the relative invariant \(F\) transforms as
\(
F \mapsto (1-tx_i)^{-3\omega}\cdot F.
\)
Comparing the exponents of the factor \((1-tx_i)\) for this fixed \(i\) yields
\[
-3\omega=m_i\in\mathbb{Z}.
\]
Since the index \(i\) was arbitrary, we conclude that \(-3\omega\in\mathbb{Z}\).

Now consider another one-parameter subgroup given by
\[
(x_i, y_i) \mapsto (e^t x_i, y_i).
\]
The Jacobian of this transformation is \( e^t \), so the action on a relative invariant \( F \) of weight \( \omega \) is
\[
F \mapsto e^{nt\omega} \cdot F.
\]
According to Theorem~\ref{thm:first-prol}, the prolonged action of this transformation is
\[
(x_i, y_i, p_i, q_i) \mapsto \left(e^t x_i,\; y_i,\; \frac{p_i}{e^t},\; q_i\right).
\]

Factoring out the maximal power of \(e^t\) common to all terms, there exist integers \(m_P(i),m_Q(i)\in\mathbb{Z}\) and polynomials \(\widehat P_i,\widehat Q_i\) (not divisible by \(e^t\)) such that
\[
P \mapsto e^{t\,m_P(i)}\,\widehat P_i,\qquad
Q \mapsto e^{t\,m_Q(i)}\,\widehat Q_i.
\]
Therefore
\[
F \mapsto e^{t\,\bigl(m_P(i)-m_Q(i)\bigr)}\cdot \widehat F_i,\qquad
\widehat F_i:=\widehat P_i/\widehat Q_i,
\]
and the exponent \(m_i:=m_P(i)-m_Q(i)\) is an integer. Comparing this with the weight transformation
\(
F \mapsto e^{n t \omega}\cdot F
\)
yields
\[
n\omega = m_i \in \mathbb{Z}.
\]
Since \(i\) was arbitrary, we conclude that \(n\omega\in\mathbb{Z}\).

Combining these results, we find that
\[
\omega \in \tfrac{1}{3} \mathbb{Z} \cap \tfrac{1}{n} \mathbb{Z} = \tfrac{1}{g} \mathbb{Z}, \quad \text{where } g= \gcd(n, 3),
\]
so
\[
W_n \subseteq \tfrac{1}{g} \mathbb{Z}.
\]

Theorem~\ref{thm:zn-general} guarantees the existence of a relative invariant \( z_n \in \mathcal{I}_n \) of weight \( -\tfrac{1}{g} \), which implies
\[
-\tfrac{1}{g} \in W_n \quad \Rightarrow \quad \tfrac{1}{g} \mathbb{Z} \subseteq W_n.
\]
Thus, we obtain the equality:
\[
W_n = \tfrac{1}{g} \mathbb{Z}.
\]

Now consider an arbitrary relative invariant \( F \in \mathcal{I}_n \). From the above, its weight \( \operatorname{wt}(F) \) must be divisible by \( -\tfrac{1}{g} \), i.e., \( -g \cdot \operatorname{wt}(F) \in \mathbb{Z} \). Since \( z_n \) has weight \( -\tfrac{1}{g} \), the product
\[
F \cdot z_n^{-g \cdot \operatorname{wt}(F)}
\]
has weight zero, and hence belongs to the field of absolute invariants \( \mathcal{I}_{n,0} \). Therefore, we can write
\[
F = z_n^{-g \cdot \operatorname{wt}(F)} \cdot F'
\quad \text{for some } F' \in \mathcal{I}_{n,0}.
\]

This completes the proof that \( \mathcal{I}_n = \mathcal{I}_{n,0}(z_n) \).
\end{proof}

Note that in constructing the primitive elements, we may use the polynomials \( \delta_{ijk} \) instead of the determinants \( \Delta_{ijk} \), since their product \( \delta_{ijk} \cdot \Delta_{ijk} \) is an absolute invariant.

\section{Conclusions}
We have established a complete, minimal description of the field of first-order joint projective invariants under the diagonal $PGL(3,\mathbb R)$-action, proved that this field is obtained by adjoining a single weight  $-1$ relative invariant to the field of absolute invariants, and clarified the geometric r?le of each generator.  
Despite the fact that we have provided a complete algebraic description of all relative invariants of weight~$-1$, the explicit computation of the corresponding image projective invariants remains a nontrivial task. In particular, integrals of the form
\[
\underbrace{\int\!\cdots\!\int}_{2n \text{  integrals} } z_n\,dx_1\,dy_1\,\cdots\,dx_n\,dy_n,
\]
represent $2n$-fold projective invariants constructed from relative densities~$z_n$ evaluated at $n$ points. Such integrals, although projectively invariant, are generally intractable to compute directly for~$n > 3$, due to the rapidly growing dimensionality of the configuration space.

In future work, we aim to develop analytical and numerical methods for reducing the complexity of evaluating these high-order projective integrals. This creates an opportunity for practical applications in computer vision, particularly in image classification tasks involving projective transformations.

\end{document}